\newcommand{\N}{{\mathbb N}}
\newcommand{\R}{{\mathbb R}}
\newcommand{\T}{{\mathbb T}}
\newcommand{\Z}{{\mathbb Z}}
\newcommand{\wtV}{{\widetilde{V}}}
\newcommand{\cN}{{\mathcal N}}
\newcommand{\cO}{{\mathcal O}}
\newcommand{\cR}{{\mathcal R}}
\newcommand{\cT}{{\mathcal T}}
\newcommand{\cV}{{\mathcal V}}
\renewcommand{\d}{\partial}
\newcommand{\la}{\left\langle}
\newcommand{\ra}{\right\rangle}
\def\sleq{\lesssim}
\def\di{{\rm d}}
\newtheorem{theorem}{Theorem}[section]
\newtheorem{lemma}[theorem]{Lemma}
\newtheorem{corollary}[theorem]{Corollary}
\newtheorem{proposition}[theorem]{Proposition}
\newtheorem{definition}[theorem]{Definition}
\newtheorem{remark}[theorem]{Remark}
\title{Almost global existence for the nonlinear Klein-Gordon equation in the nonrelativistic limit} 
\author{
 S. Pasquali \newline
\footnote{ \textit{Email: } \texttt{stefano.pasquali@unimi.it} }
}
\begin{document}

\maketitle

\begin{abstract}
We study the one-dimensional nonlinear Klein-Gordon (NLKG) equation 
with a convolution potential, and we prove that solutions 
with small $H^s$ norm remain small for long times. 
The result is uniform with respect to $c \geq 1$, 
which however has to belong to a set of large measure. \\
\emph{Keywords}: nonrelativistic limit, nonlinear Klein-Gordon \\
\emph{MSC2010}: 37K55, 70H08, 70K45, 81Q05
\end{abstract}

\maketitle 

\section{Introduction} \label{intro}

In this paper we study the nonlinear Klein-Gordon (NLKG) equation 
in the nonrelativistic limit, namely as the speed of light $c$ 
tends to infinity. 
Formal computations going back to the the first half of the
last century suggest that, up to corrections of order $\cO(c^{-2})$, the
system should be described by the nonlinear Schr\"odinger (NLS) 
equation. Subsequent mathematical results have shown that the NLS 
describes the dynamics over time scales of order $\cO(1)$. \\

The nonrelativistic limit for the Klein-Gordon equation 
on $\R^d$ has been extensively studied over more then 30 years, and
essentially all results only show convergence of the solutions of NLKG 
to the solutions of the approximate equation fortimes of order $\cO(1)$. 
The typical results, due to Masmoudi, Machihara, Nakanishi and Ozawa, 
ensure convergence locally uniformly in time, either with loss of regularity  
(see \cite{tsutsumi1984nonrelativistic}, \cite{najman1990nonrelativistic} and \cite{machihara2001nonrelativistic}) 
or without loss of regularity (see \cite{machihara2002nonrelativistic}, \cite{masmoudi2002nonlinear}).

We also mention the recent papers \cite{lu2016partially} and \cite{pasquali2017dynamics}, 
which discuss the long-time convergence of solution of NLKG in the 
nonrelativistic limit on $\R^d$: however, the results proved in both papers 
have some limitations, either on the nonlinearity (in \cite{lu2016partially} 
the authors studied only the quadratic NLKG) or on the particular form of the 
solution (in \cite{pasquali2017dynamics}).

\vskip 10pt 

Concerning the nonrelativistic limit of the NLKG on the $d$-dimensional torus 
$\T^d$, we mention the work by Faou-Schratz \cite{faou2014asymptotic}, in which 
the authors were able to justify the approximation of the solutions of NLKG 
by solutions of the NLS over time scales of order $\cO(1)$ through a variant 
of Birkhoff Normal Form theory. 
In \cite{pasquali2017dynamics} Birkhoff Normal Form Theory was exploited 
in order to recover the result by Faou-Schratz, and to generalize 
it to all smooth compact Riemannian manifolds.

Actually, \cite{faou2014asymptotic} dealt with the construction of 
numerical schemes which are robust in the nonrelativistic limit; 
we refer also to \cite{bao2012analysis}, \cite{bao2016uniformly} and to 
\cite{baumstark2016uniformly} for the numerical analysis of the 
nonrelativistic limit of the NLKG equation. \\

In this paper we generalize the techniques developed in 
\cite{bambusi2006birkhoff} 
in order to prove a long-time existence result for the NLKG with a 
convolution potential with Dirichlet boundary conditions, 
uniformly in $c \geq 1$.

An immediate corollary of our result allows us to show that 
for any $\alpha>0$ any solution in $H^s$ with initial datum of size 
$\cO(c^{-\alpha})$ remains of size $\cO(c^{-\alpha})$ up to times of 
order $O(c^{\alpha(r+1/2)})$ for any $r \geq 1$; 
however, we have to assume that both the parameter $c$ 
and the coefficients of the potential belong to a set of large measure. 
The main limitation of such a result is that it holds only for solutions 
with initial data which are small with respect to $c$. 

The new ingredient in the proof with respect to \cite{bambusi2006birkhoff} 
is a diophantine type estimate for the frequencies, 
which holds uniformly when $c\to\infty$. 

An aspect that would deserve future work is the study of the
nonrelativistic limit of the NLKG without potential. This is expected
to be a quite subtle problem since, for $c \neq 0$ the frequencies of
NLKG are typically non resonant, while the limiting frequencies are
resonant. The issue of long-time existence for small solutions of the NLKG 
equation without potential on compact manifolds has received a lot of interest; 
see for example \cite{delort2004long}, \cite{delort2009long}, \cite{fang2010long}, 
\cite{fang2017almost} and \cite{delort2017long}. 
However, all results in the aforementioned papers rely on a nonresonance 
condition which is not uniform with respect to $c$. \\

The paper is organized as follows.
In sect. \ref{results} we state the results of the paper, together with 
some examples and comments. In sect. \ref{proofBNFtorus} we prove our result 
for the NLKG with a convolution potential on $I:=[0,\pi]$. \\

\emph{Acknowledgements.} This work is based on author's PhD thesis. 
He would like to express his thanks to his supervisor Professor Dario Bambusi.

\section{Statement of the Main Results} \label{results}

Now consider the following equation: 
\begin{align} \label{NLKGpot}
\frac{1}{c^2} \; u_{tt} \; - \; u_{xx} \; + \; c^2 \; u \; + V \ast u + \d_u f(u) &= 0, 
\end{align}
with $c \in [1,+\infty)$, $x \in I$, $f \in C^\infty(\R)$ a real-valued 
function with a zero of order four at the origin, 
with Dirichlet boundary conditions. 
The potential has the form 
\begin{align} \label{coeffpot}
V(x) &= \sum_{j \geq 1} \; v_j \; cos(jx).
\end{align}
\indent By using the same approach of \cite{bambusi2006birkhoff}, 
we fix a positive $s$, and for any $M>0$ we consider the probability space
\begin{align} \label{probspace}
\cV := \cV_{s,M} &= \left\{ (v_j)_{j\geq1} \; : \; v'_j \; := \; M^{-1}j^{s}v_j \in \left[-\frac{1}{2},\frac{1}{2}\right] \right\},
\end{align}
and we endow the product probability measure on the space of $(c,(v'_j)_j)$.

We recall that in this case the frequencies are given by 
\begin{align} \label{freq}
\omega_j := \omega_j(c) &= c \sqrt{c^2+\lambda_j} \; = \; c^2 \; + \frac{\lambda_j}{1+\sqrt{1+\lambda_j/c^2} } \\
&= \; c^2 \; + \frac{\lambda_j}{2} -\frac{\lambda_j^2}{2c^2} \frac{1}{(1+\sqrt{1+\lambda_j/c^2})^2 },
\end{align}
where $\lambda_j = j^2 + v_j$. 
Now we introduce the following change of coordinates, 
\begin{align}
 \psi &:= \frac{1}{\sqrt{2}} \left[ \left(\frac{(c^2-\Delta+\wtV)^{1/2}}{c} \right)^{1/2} u - i \left(\frac{c}{(c^2-\Delta+\wtV)^{1/2}}\right)^{1/2}u_t \right], \label{changevartorus}
\end{align}
where $\wtV$ is the operator that maps $u$ to $V \ast u$.
The Hamiltonian of \eqref{NLKGpot} now reads
\begin{align} \label{NLKGhamtorusnew}
H(\psi,\bar\psi) &= \la \bar{\psi}, c(c^2-\Delta+\wtV)^{1/2}\psi \ra 
+ \int_I f \left( 
\left( \frac{c}{(c^2-\Delta+ \wtV)^{1/2}} \right)^{1/2} \frac{\psi+\bar\psi}{\sqrt{2}} 
\right) \di x.
\end{align}
Therefore the Hamiltonian takes the form 
\begin{align} \label{NLKGtorus}
H(\psi,\bar\psi) &= H_0(\psi,\bar\psi) + N(\psi,\bar\psi),
\end{align}
where

\begin{align}
H_0(\psi,\bar\psi) &=  \la \bar\psi, c (c^2-\Delta+\wtV)^{1/2}\psi \ra, \label{H_0} \\
N(\psi,\bar\psi) &= \int_{I} f \left( \left(\frac{c}{ (c^2-\Delta+ \wtV)^{1/2} }\right)^{1/2} (\psi+\bar\psi) \right) \di x, \label{Nlin} \\
&\sim \sum_{l \geq 4} \int_{I }N_l(x) \left( \left(\frac{c}{ (c^2-\Delta+\wtV)^{1/2} }\right)^{1/2} (\psi+\bar\psi) \right)^l \di x,
\end{align}
where $N_l \in C^\infty$ for each $l$ (since $V \in C^\infty$), and 
\begin{align*}
\left(\frac{c}{ (c^2-\Delta+\wtV)^{1/2} }\right)^{1/2}: H^s \to H^s
\end{align*}
is a smoothing pseudodifferential operator, which can be estimated uniformly in 
$c \geq 1$. 

\begin{theorem} \label{BNFtorusthm}
Consider the equation \eqref{NLKGpot} and fix $\gamma>0$, and $\tau>1$. 
Then for any $r \geq 1$ there exists $s^\ast >0$ and, for any $s>s^\ast$, 
there exists a set $\cR_\gamma:=\cR_{\gamma,s,r} \subset \; [1,+\infty) \times \cV$ 
satisfying 
\begin{align*}
|\cR_{\gamma} \cap ([n,n+1] \times \cV)| &= \cO(\gamma) \; \; \forall n \in \N_0,
\end{align*}
and there exists $R_s>0$ such that for any 
$(c,(v_j)_j) \in ([1,+\infty) \times \cV)\setminus \cR_\gamma$ and 
for any $R<R_s$ there exist $N:=N(r,R)>0$, and a canonical transformation 
\begin{align*}
\cT_c:=\cT^{(r)}_{c} &: B_s(R/3) \to B_s(R)
\end{align*}
 such that 
\begin{align*}
H_r := H \circ \cT_c^{(r)} &= H_0 + Z^{(r)} + R^{(r)},
\end{align*}
where $Z^{(r)}$ is a polynomial of degree (at most) $r+2$, such that
\begin{align}
Z^{(r)}(\psi,\bar\psi) &= \sum_{n \in \N^\N} Z_{n}\psi^n \bar\psi^n, \nonumber \\
Z_{n} \neq 0 &\Longrightarrow \sum_{l \geq N+1} n_l \leq 2, \label{NFtorusprop}
\end{align}
and such that
\begin{align} 
\sup_{ B_s( R/3 ) } \|X_{R^{(r)}}(\psi,\bar\psi)\|_{H^s} &\leq K_s \; R^{r+3/2}, \label{remtorusest} \\
\sup_{ B_s( R/3 ) } \|\cT^{(r)}_c-id\|_{H^s} &\leq K_s \; R^2. \label{CTtorusest}
\end{align}
and we have that $Z^{(r)}$ depends on the actions $I = \psi \, \bar\psi$ only. 
Moreover, there exist $K^\ast_s>0$ and $K'>0$ such that if the initial datum satisfies 
\begin{align} \label{indatumass}
\|(\psi_0,\bar\psi_0)\|_{H^s} &\leq K \; R
\end{align}
with $K < K^\ast_s$, then 
\begin{align}
\|(\psi(t),\bar\psi(t))\|_{H^s} &\leq 2K \; R, \; \; |t| \leq K' \, R^{-(r+1/2)} \label{smallsoltorus} \\
\|(I(t),\bar I(t))\|_{H^s} &\leq K \; R^3, \; \; |t| \leq K' \, R^{-(r+1/2)}. \label{acttorus}
\end{align}
Finally, there exists a smooth torus $I_c$ such that for any $s_1 < s-1/2$ 
there exists $K_{s_1}>0$ such that
\begin{align}
d_{s_1}( (\psi(t),\bar\psi(t)), I_c )  &\leq K_{s_1} \, R^{\frac{r_1}{2} + 1}, \; \; |t| \leq K' \, R^{-(r-r_1+1/2)},
\end{align}
where $r_1 \leq r$, and $d_{s_1}$ is the distance in $H^{s_1}$.
\end{theorem}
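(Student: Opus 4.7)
The plan is to adapt the Birkhoff normal form scheme of \cite{bambusi2006birkhoff} to the parameter-dependent Hamiltonian \eqref{NLKGhamtorusnew}, with the key new ingredient being a small-divisor estimate that is uniform in $c \geq 1$.

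First, I would derive a non-resonance estimate for the frequencies $\omega_j(c)$. Starting from the expansion
\begin{align*}
\omega_j(c) &= c^2 + \frac{\lambda_j}{2} - \frac{\lambda_j^2}{2c^2(1+\sqrt{1+\lambda_j/c^2})^2},
\end{align*}
I would show that for any multi-index $k \in \Z^\N$ with $|k| \leq r+2$ and at most two components supported on frequencies larger than $N(r)$, a linear combination $\sum_j k_j \omega_j(c)$ with $\sum_j k_j = 0$ equals $\tfrac12\sum_j k_j \lambda_j + \cO(c^{-2})$, so the coefficients $v_j$ enter linearly. This lets one excise a set where $|\sum_j k_j \omega_j(c)| < \gamma/|k|^\tau$, whose measure is estimated uniformly in every strip $[n,n+1]\times\cV$ by a Fubini argument using the independence of the rescaled variables $v'_j$. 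Summing over admissible multi-indices, which are finitely many thanks to \eqref{NFtorusprop}, yields the set $\cR_\gamma$.

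Next, I would iteratively construct $\cT_c = \Phi^{1}_{\chi^{(1)}} \circ \cdots \circ \Phi^{1}_{\chi^{(r)}}$, where each $\chi^{(k)}$ is a polynomial generating function of degree $k+2$ solving the homological equation $\{H_0, \chi^{(k)}\} = Z^{(k)} - \tilde H_k$, with $Z^{(k)}$ in normal form. The diophantine bound from the first step, combined with the fact that the symbol $(c/(c^2-\Delta+\wtV)^{1/2})^{1/2}$ is smoothing and bounded uniformly in $c \geq 1$, gives tame estimates on the vector field $X_{\chi^{(k)}}$ that are uniform in $c$. After $r$ steps the transformed Hamiltonian takes the stated form $H_0 + Z^{(r)} + R^{(r)}$, with the estimates \eqref{remtorusest} and \eqref{CTtorusest} following from the standard Lie series calculus in a scale of Banach spaces.

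Finally, since $Z^{(r)}$ depends only on the actions $I_j = \psi_j \bar\psi_j$, one has $\{I_j, H_0 + Z^{(r)}\} = 0$, so the evolution of each action is entirely driven by $R^{(r)}$. A bootstrap argument using \eqref{remtorusest} and \eqref{indatumass} yields \eqref{smallsoltorus} and \eqref{acttorus} on $|t| \leq K' R^{-(r+1/2)}$, and pulling back by $\cT_c^{-1}$, which is $\cO(R^2)$-close to the identity, preserves them up to constants. For the final conclusion, I would define $I_c$ as the invariant torus of $H_0 + Z^{(r)}$ through $(\psi_0,\bar\psi_0)$ and control the $H^{s_1}$-distance to the true flow by integrating the vector field of $R^{(r)}$ over the shorter interval $|t|\leq K' R^{-(r-r_1+1/2)}$; the loss of half a derivative is absorbed by taking $s_1 < s-1/2$. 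The main obstacle is the uniformity of the measure estimate in the first step: despite the unbounded growth of the dominant mass term $c^2$ in $\omega_j(c)$, one needs $|\cR_\gamma \cap ([n,n+1]\times\cV)| = \cO(\gamma)$ independently of $n$. This is achieved by noting that either the combination is dominated by a nonzero integer multiple of $c^2$ and is therefore huge, or the $c^2$'s cancel and the nontrivial piece is $\tfrac12\sum k_j v_j$ plus controlled $\cO(c^{-2})$ perturbations, so that the probabilistic estimate in the $v_j$-directions gives the required bound uniformly in $c$.
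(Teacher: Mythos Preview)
The overall architecture---uniform nonresonance, iterative Birkhoff normal form, bootstrap on the actions---matches the paper, but your nonresonance step has a genuine gap.

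Your dichotomy ``either $\sum_j k_j\neq 0$ and the $c^2$ term dominates, or $\sum_j k_j=0$ and one uses the $v_j$'' breaks down once the high modes enter. In the Melnikov-type combinations $\sum_{j\le N}k_j\omega_j+\sigma_1\omega_l+\sigma_2\omega_m$ with $l,m>N$, the expansion $\omega_j=c^2+\lambda_j/2+\cO(c^{-2})$ is valid only when $\lambda_j\ll c^2$; for $\lambda_l\gtrsim c^2$ one has $\omega_l\sim c\lambda_l^{1/2}$, which can exactly compensate a nonzero multiple of $c^2$ at specific values of $c$. Concretely, in the first-Melnikov case treated in Proposition~\ref{cond1mel}, with $\alpha:=(\sum_{j\le N}k_j)-1>0$ and $c>\lambda_N^{1/2}r^{1/2}$, the function $\tilde p_{k,l}(c^2)=\alpha c^2-\lambda_l/(1+\sqrt{1+\lambda_l/c^2})$ vanishes at $c^2=\lambda_l/(\alpha(\alpha+2))$, so the small divisor cannot be cleared by moving the $v_j$; one must excise a neighbourhood of this resonance in the $c$-direction, using a lower bound on $\partial_{c^2}\tilde p_{k,l}$. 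This is exactly why $\cR_\gamma$ lives in $[1,\infty)\times\cV$ rather than in $\cV$ alone, and why the measure statement is phrased strip by strip in $c$. Your sketch, which performs the probabilistic estimate only ``in the $v_j$-directions,'' misses this mechanism entirely. Relatedly, the admissible multi-indices are \emph{not} finitely many: $l,m>N$ range over infinitely many values, and the paper argues case by case (according to the relative size of $c$ versus $\lambda_l,\lambda_m$) that the corresponding resonant sets either are empty or have summable measure.

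A second omission concerns the coupling between $N$, $R$, and $s$. In the paper the remainder after $r$ steps splits as $\cR^{(r)}_N+\cR^{(r)}_T$, with $\sup\|X_{\cR^{(r)}_N}\|_s\lesssim R^2/N^{s-1}$ coming from the part that is at least cubic in the high modes (via the tame estimate of Lemma~\ref{tameestlemma}) and $\sup\|X_{\cR^{(r)}_T}\|_s\lesssim (N^\tau R)^{r+2}$ from the Taylor truncation. The choice $N=R^{-1/(2\tau(r+2))}$ and $s>s^\ast=2\tau r(r+2)+1$ is what makes both pieces $\cO(R^{r+3/2})$; without this balancing, neither \eqref{remtorusest} nor the exponent $r+1/2$ in \eqref{smallsoltorus} follows from the iteration you describe.
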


\begin{remark}
The fact that $Z$ depends only on the actions is a direct consequence of the 
non-resonance property established in Theorem \ref{cond2mel}.
\end{remark}

\begin{remark}
It would also be interesting to study the dependence of $I_c$ on $c$. 
One could expect that it should converge to an invariant torus of the NLS with 
a convolution potential. We expect this fact to be true, but it needs further 
investigation for a proof. This is due to the fact that the NLS is the 
{\it singular} limit of NLKG and to the fact that $c$ is only allowed to vary 
in Cantor like sets, so that one can only expect a Whitney-smooth dependence.
\end{remark}

By exploiting the same argument used to prove Theorem \ref{BNFtorusthm} 
one can immediately deduce the following almost global existence result for 
solutions with small (with respect to $c$) initial data.

\begin{corollary}
Fix $\alpha>0$, $\gamma>0$, and $\tau>1$. 
Then for any $r \geq 1$ there exists $s^\ast >0$ and, for any $s>s^\ast$, 
there exists a set $\cR_{\gamma,\tau,s,\alpha,r} \subset \; [1,+\infty) \times \cV$ 
 such that there exists $c^\ast>0$ such that for any 
$(c,(v_j)_j) \in ([1,+\infty) \times \cV)\setminus \cR_{\gamma,\tau,s,\alpha,r}$ 
with $c>c^\ast$ the following holds: there exist $K^\ast_s>0$ and $K'>0$ 
such that if the initial datum satisfies 
\begin{align*}
\|(\psi_0,\bar\psi_0)\|_{H^s} &\leq \frac{K}{c^\alpha}
\end{align*}
with $K < K^\ast_s$, then 
\begin{align*}
\|(\psi(t),\bar\psi(t))\|_{H^s} &\leq \frac{2K}{c^\alpha}, \; \; |t| \leq K' \, c^{\alpha(r+1/2)}, \\
\|(I(t),\bar I(t))\|_{H^s} &\leq \frac{K}{c^{3\alpha}}, \; \; |t| \leq K' \, c^{\alpha(r+1/2)}.
\end{align*}
\end{corollary}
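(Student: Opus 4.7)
The plan is to apply Theorem \ref{BNFtorusthm} directly, with the specific choice $R := c^{-\alpha}$. Since the theorem allows any $R < R_s$, this substitution is admissible precisely when $c^{-\alpha} < R_s$, i.e.\ when $c > c^* := R_s^{-1/\alpha}$. I would therefore set $c^*$ as above and define the exceptional set of the corollary to be $\cR_{\gamma,\tau,s,\alpha,r} := \cR_\gamma$ (the set produced by Theorem \ref{BNFtorusthm}), possibly intersected with $\{c \geq c^*\} \times \cV$; no fresh small-denominator analysis is needed because the uniform-in-$c$ non-resonance already built into Theorem \ref{BNFtorusthm} is exactly what the corollary is asking to exploit.

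Next, I would read off the conclusions of Theorem \ref{BNFtorusthm} under this substitution. The smallness hypothesis \eqref{indatumass} turns into $\|(\psi_0,\bar\psi_0)\|_{H^s} \leq K c^{-\alpha}$, which matches the hypothesis of the corollary verbatim. Plugging $R = c^{-\alpha}$ into the time window $|t| \leq K' R^{-(r+1/2)}$ yields $|t| \leq K' c^{\alpha(r+1/2)}$, and \eqref{smallsoltorus} then gives $\|(\psi(t),\bar\psi(t))\|_{H^s} \leq 2K c^{-\alpha}$ on this interval. Similarly, \eqref{acttorus} becomes $\|(I(t),\bar I(t))\|_{H^s} \leq K c^{-3\alpha}$, which is the second conclusion of the corollary.

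The key point that makes this reduction work is that the constants $K^*_s$ and $K'$ in Theorem \ref{BNFtorusthm} are independent of $R$, hence independent of $c$; consequently all estimates are uniform in $c > c^*$, as required. I do not anticipate any real obstacle in executing this plan: it is essentially a one-line substitution combined with the observation that Theorem \ref{BNFtorusthm}'s $R$-parameter is a free knob we can tune as a function of $c$. The only subtlety worth spelling out in the write-up is simply that the lower bound $c > c^*$ arises solely from the admissibility constraint $R < R_s$, so enlarging $c^*$ (or, equivalently, taking a smaller multiplicative constant in $R = K_0 c^{-\alpha}$) never degrades the conclusions but only sharpens the range of applicability.
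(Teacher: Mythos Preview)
Your proposal is correct and matches the paper's own treatment: the paper does not give a separate proof but simply states that the corollary follows immediately from (the argument of) Theorem~\ref{BNFtorusthm}, which is exactly the substitution $R=c^{-\alpha}$ you describe. The choice $c^\ast=R_s^{-1/\alpha}$ and the observation that $K^\ast_s,K'$ are $R$-independent are precisely the points that make this reduction go through.
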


\section{Proof of Theorem \ref{BNFtorusthm} } \label{proofBNFtorus}
\sectionmark{Proof of Theorem}

In order to prove Theorem \ref{BNFtorusthm}, we need to show some 
nonresonance properties of the frequencies $\omega=(\omega_j)_{j>0}$: it will be 
crucial that these properties hold uniformly (or at least, up to a set of small 
probability) in $(1,+\infty) \times \cV$, since this will allow us to deduce 
a result which is valid in the nonrelativistic limit regime. 

We use the notation ``$a \sleq b$'' (resp. ``$a \gtrsim b$'') to mean 
``there exists a constant $K>0$ independent of $c$ such that $a \leq Kb$'' 
(resp. $a \geq Kb$).

\begin{proposition} \label{cond0pot}
Let $r\geq1$, $c\geq1$ be fixed. 
Then $\forall \gamma>0$ $\exists \cV'_{s,M,\gamma} \subset \cV$ 
with $|\cV \setminus \cV'_{s,M,\gamma}|=\cO(\gamma)$, and 
$\exists \tau>1$ s.t. $\forall (v_j)_{j \geq 1} \in \cV'_{s,M,\gamma}$ and 
$\forall$ $N\geq1$ 
\begin{align} \label{nonres0prop}
|\omega \cdot k \; + \; n| &\geq \frac{\gamma}{N^\tau}
\end{align}
 for $0<|k|\leq r$, $supp(k) \subseteq \{1,\ldots,N\}$, 
and $\forall$ $n \in \mathbb{Z}$. \\
\end{proposition}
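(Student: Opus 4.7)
The plan is to follow the measure-theoretic strategy of \cite{bambusi2006birkhoff}, adapted so that all the transversality estimates are \emph{uniform in} $c \geq 1$. With $c$, $r$, $\gamma$ fixed, I would introduce, for each $k \in \Z^\N$ with $0 < |k| \leq r$ and $\text{supp}(k) \subseteq \{1,\ldots,N\}$ and each $n \in \Z$, the resonant set
\begin{align*}
A_{k,n} \; := \; \left\{ (v_j)_j \in \cV \; : \; |\omega \cdot k + n| \; < \; \frac{\gamma}{N^\tau} \right\},
\end{align*}
and set $\cV'_{s,M,\gamma} := \cV \setminus \bigcup_{k,n,N} A_{k,n}$, so that the task reduces to showing $|\bigcup_{k,n,N} A_{k,n}| = \cO(\gamma)$.

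The key observation is that $\omega_j$ depends on $v$ only through $v_j$ (since $\lambda_j = j^2 + v_j$), hence for any $j_0 \in \text{supp}(k)$
\begin{align*}
\frac{\partial (\omega \cdot k)}{\partial v'_{j_0}} \; = \; k_{j_0} \; \frac{M}{j_0^s} \; \cdot \; \frac{c}{2\sqrt{c^2 + \lambda_{j_0}}}.
\end{align*}
The crucial step is a lower bound on this derivative which is uniform in $c \geq 1$: if $j_0 \leq c$ the factor $c/\sqrt{c^2 + \lambda_{j_0}}$ is $\gtrsim 1$, while if $j_0 > c$ it is $\sim c/j_0 \geq 1/j_0$ thanks to the assumption $c \geq 1$. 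Choosing $j_0 := \min \text{supp}(k)$ and using $|k_{j_0}| \geq 1$ and $j_0 \leq N$, this yields
\begin{align*}
\left| \frac{\partial (\omega \cdot k)}{\partial v'_{j_0}} \right| \; \gtrsim \; \frac{M}{N^{s+1}}
\end{align*}
uniformly in $c \geq 1$. This uniform transversality is the essential novelty with respect to \cite{bambusi2006birkhoff}.

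Fubini along the $v'_{j_0}$-slice then gives $|A_{k,n}| \sleq \gamma N^{s+1-\tau}/M$. To conclude I would count: since $|\partial_{v_j}\omega_j| \leq 1/2$ and $|v_j| \leq Mj^{-s}/2$, the total variation of $\omega \cdot k$ over $\cV$ is bounded by a constant depending only on $r$, $s$, $M$, so the number of integers $n$ for which $A_{k,n} \neq \emptyset$ is $\cO(1)$. The number of admissible $k$ with $\max \text{supp}(k) = N$ is $\cO(N^{r-1})$, so, summing,
\begin{align*}
\bigg| \bigcup_{k,n,N} A_{k,n} \bigg| \; \sleq \; \frac{\gamma}{M} \; \sum_{N \geq 1} N^{r+s-\tau},
\end{align*}
which is $\cO(\gamma)$ as soon as $\tau > r + s + 1$.

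The main obstacle is precisely the transversality estimate in the regime $j_0 \gg c$: naively $\partial_{v_j}\omega_j \sim c/(2j)$ degenerates for small $c$, and only the assumption $c \geq 1$ salvages the lower bound $\gtrsim 1/j$. Everything else is standard bookkeeping of measures, and the free parameter $\tau$ absorbs the polynomial loss in $N$ coming from both the transversality bound and the counting of admissible $k$'s.
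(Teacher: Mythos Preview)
Your approach is essentially the same as the paper's: both compute the derivative of $\omega\cdot k$ with respect to a single $v'_{h}$ in the support of $k$, bound it below by a negative power of $N$ uniformly in $c\geq 1$, apply a Fubini/R\"ussmann-type measure estimate to each resonant set, and then count and sum. Your counting of the relevant integers $n$ via the bounded variation of $\omega\cdot k$ over $\cV$ (giving $\cO(1)$ values of $n$, uniformly in $c$ and $N$) is in fact cleaner than the paper's range $|n|\leq rN$ and yields a slightly smaller admissible $\tau$, but otherwise the arguments coincide.
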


\begin{proof}
Let $p_{k,n}((v_j)_{j\geq1}) \; := \; \sum_{j=1}^N \omega_jk_j + n$, 
and assume that $k_h \neq 0$ for some $h$. Then 
\begin{align*}
\left| \frac{\d p_{k,n}}{\d v'_h} \right| = \left| \frac{k_h h^{-s}}{2 \sqrt{1+\lambda_h/c^2}} \right| &\gtrsim \; \frac{1}{2 h^s \sqrt{1+h^{max(s,2)}} } \geq \frac{1}{2 N^s \sqrt{1+N^{max(s,2)}}} \; > \; 0, 
\end{align*}
hence by Lemma 17.2 of \cite{russmann2001invariant}
\begin{align}
\left| \{(v'_j)_{j\geq1} \; : \; |p_{k,n}((v_j)_{j\geq1})| < \gamma_0\} \right| &\leq  \gamma_0 \; N^{s+max(s,2)/2}; \nonumber \\
\left| \bigcup_{|n| \leq rN} \; \bigcup_{\stackrel{0<|k|\leq r}{supp(k)\subseteq\{1,\ldots,N\}}} \; \{(v'_j)_{j\geq1} \; : \; |p_{k,n}((v_j)_{j\geq1})| < \gamma_0\} \right| &\leq \gamma_0 \; N^{r+1+s+max(s,2)/2} \label{estresset}, 
\end{align}
and by choosing $\gamma_0 \; = \; \frac{\gamma}{N^\tau}$ with 
$\tau > s+r+2+max(s,2)/2$ we get that the left hand side of \eqref{estresset} 
is bounded, and therefore the thesis holds.
\end{proof}

\begin{proposition} \label{cond1mel}
Let $r\geq1$ be fixed. Then $\forall \gamma>0$ there exists a set 
$\cR_\gamma:=\cR_{\gamma,s,r} \subset \; [1,+\infty) \times \cV$ satisfying 
\begin{align*}
|\cR_{\gamma} \cap ([n,n+1] \times \cV)| &= \cO(\gamma) \; \; \forall n \in \N_0,
\end{align*}
and $\exists \tau>1$ such that 
$\forall (c,(v_j)_j) \in ([1,+\infty) \times \cV) \setminus \cR_{\gamma}$ 
and $\forall$ $N \geq 1$
\begin{align} \label{nonres1prop}
\left|\sum_{j=1}^N \; \omega_j k_j \; + \; \sigma \omega_l\right| &\geq \frac{\gamma}{N^\tau}
\end{align}
 for $0<|k|\leq r$, $supp(k) \subseteq \{1,\ldots,N\}$, $\sigma=\pm 1$, 
$l \geq N$.
\end{proposition}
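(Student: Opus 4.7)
The plan is to adapt the argument of Proposition~\ref{cond0pot}: I view $p(c,v) := \sum_{j=1}^N k_j\omega_j + \sigma\omega_l$ jointly as a function of $c$ and the $v_j$'s, and apply Russmann's Lemma~17.2 of~\cite{russmann2001invariant}. The main new difficulty with respect to Proposition~\ref{cond0pot} is keeping the measure estimate uniform in the strip index $n$. The case $l=N$ reduces directly to Proposition~\ref{cond0pot} by absorbing $\sigma\omega_l$ into the sum, so I restrict to $l>N$.

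First I dispose of the large-$l$ regime by a deterministic bound: using $\omega_l \geq cl/\sqrt{2}$ and $\omega_N \leq \sqrt{2}\,c(c+N)$ (valid for $M$ small and $l,N\geq 1$), whenever $l \geq C_r(c+N)$ with $C_r := 2r+2$, one computes $|p|\geq \omega_l - r\omega_N \geq \sqrt{2}\,c(c+N)\geq c \geq 1$, so \eqref{nonres1prop} holds automatically with no measure to exclude. In the complementary range $N<l<C_r(c+N)$, of cardinality $\lesssim c+N$ per fixed $c$, the key is the derivative
\begin{equation*}
\frac{\partial p}{\partial v'_l} \;=\; \sigma\,\frac{Ml^{-s}\,c}{2\sqrt{c^2+\lambda_l}},
\end{equation*}
which is bounded below in absolute value by a constant multiple of $Ml^{-s}$ for $l\leq c$ and of $Mcl^{-s-1}$ for $l>c$. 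Setting $\gamma_0 := \gamma/N^\tau$, Russmann's Lemma yields the per-triple exclusion $|\{v'_l : |p|<\gamma_0\}| \lesssim \gamma_0 (l^s + c^{-1}l^{s+1})$.

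The main obstacle is that a naive sum over $(k,\sigma,l)$ followed by integration over $c\in[n,n+1]$ produces a bound of order $\gamma\,N^{r-\tau}(n+N)^{s+1}$, which is not uniform in $n$. To restore uniformity, I split according to the leading ``$c^2$-coefficient'' $K := \sum_j k_j + \sigma$. For $K\neq 0$, the expansion $\omega_j = c^2 + \lambda_j/2 + \cO(c^{-2})$ (valid for $c \geq \max(N,l)$) gives $\partial_c p \approx 2cK$, so a Russmann estimate in the $c$-direction produces an exclusion $\lesssim \gamma_0/c$ per triple, exactly cancelling the $O(c+N)$ factor from the $l$-count. For $K=0$, the constraint forces $\sigma = -\sum_j k_j \in\{\pm1\}$; when $|k|=1$ the elementary estimate $|\omega_{j_0}-\omega_l|\gtrsim 1$ holds automatically for $l>j_0$, while for $|k|\geq 3$ one uses the deep-resonance observation that $\sum k_j j^2 = \sigma l^2$ admits at most one $l$ per $k$, combined with a refined $v'_l$-estimate exploiting the non-vanishing integer $\sum k_j j^2 - \sigma l^2$ in the complement. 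Choosing $\tau > r+s+3$ makes the sum over $N$ convergent, giving $|\cR_\gamma \cap ([n,n+1]\times\cV)| = \cO(\gamma)$ uniformly in $n$.
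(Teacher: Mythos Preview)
Your proposal shares the paper's central structural idea---splitting according to $K=\sum_j k_j+\sigma$ (the paper writes $\alpha=\sum_j k_j-1$ after fixing $\sigma=-1$) and using a R\"ussmann estimate in the $c$-direction when this coefficient is nonzero. However, two of your cases are genuinely incomplete.

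\textbf{The $K\neq 0$ case.} You invoke the expansion $\omega_j=c^2+\lambda_j/2+\cO(c^{-2})$ to get $\partial_c p\approx 2cK$, but you yourself note this is only valid for $c\geq\max(N,l)$. In your reduced range $N<l<C_r(c+N)$ there is always a sub-range $l>c$ (indeed $l$ can be as large as $(2r+2)c$), and there the expansion fails: for $l\gg c$ one has $\partial_c\omega_l\sim l$, not $2c$, so $\partial_c p$ need not be bounded away from zero by $|K|c$. You give no argument for this regime. The paper avoids this by a finer case split: when $c\leq\lambda_N^{1/2}r^{1/2}$ it shows $l^2<rN^2$ and reduces to Proposition~\ref{cond0pot}; when $c>\lambda_N^{1/2}r^{1/2}$ and $\alpha>0$ it does not expand $\omega_l$ but instead localizes near the explicit zero $c^2_{l,\alpha}=\lambda_l/(\alpha(\alpha+2))$ of the dominant part $\tilde p_{k,l}(c^2)=\alpha c^2-\lambda_l/(1+\sqrt{1+\lambda_l/c^2})$ and computes $\partial_{c^2}\tilde p_{k,l}$ there directly. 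For $\alpha<0$ with $c$ large, the paper again bounds $l$ in terms of $N$ and reduces to Proposition~\ref{cond0pot}. (Incidentally, your constant $C_r=2r+2$ in the deterministic large-$l$ bound is too small for $r\geq 3$: check $(r+1)\sqrt{2}-2r<0$ there.)

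\textbf{The $K=0$ case.} Your $|k|=1$ sub-case is fine, but the $|k|\geq 3$ sub-case is not a proof: invoking ``the non-vanishing integer $\sum k_j j^2-\sigma l^2$ in the complement'' together with ``a refined $v'_l$-estimate'' does not explain how to control $p$ uniformly in $c$ when this integer is nonzero, since the $\cO(\lambda_j^2/c^2)$ corrections are not small for $c\leq N$. The paper's treatment here is much simpler and avoids all of this: when $\alpha=0$ the $c^2$-term cancels, one checks that $|p_{k,l}|$ small forces $l^2\leq 3(N^2+N^s)^2r^2$, and then applies Proposition~\ref{cond0pot} with $N'=\sqrt{3}(N^2+N^s)r$ and $r'=r+1$. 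No sub-splitting by $|k|$ is needed.

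In short: whenever $l$ can be bounded polynomially in $N$ (which happens for $\alpha=0$, for $\alpha\neq 0$ with $c$ small, and for $\alpha<0$ with $c$ large), the paper reduces to Proposition~\ref{cond0pot}; only for $\alpha>0$ with $c$ large does it use the $c$-variable, and there it works near an explicit critical point rather than via a global expansion. Your sketch misses the first mechanism and oversimplifies the second.
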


\begin{proof}
Without loss of generality, we can choose $\sigma=-1$.

Now fix $k \in \Z^N$ with $0<|k|\leq r$, and fix $l \geq N$. 
Set $p_{k,l}(c,(v_j)_{j\geq1}) \; := \; \sum_{j=1}^N \omega_jk_j-\omega_l$. 
We can rewrite the function $p_{k,l}$ in the following way: 
\begin{align*}
p_{k,l}(c,(v_j)_{j\geq1}) \; &= \alpha c^2 \; + \; \sum_{j=1}^N \frac{k_j\lambda_j}{1+\sqrt{1+\lambda_j/c^2}} \; - \frac{\lambda_l}{1+\sqrt{1+\lambda_l/c^2}},
\end{align*}
where $\alpha:= (\sum_{j=1}^Nk_j)-1 \in \{-r-1,\ldots,r-1\}$. 
Now we have to distinguish some cases: 

\emph{Case $\alpha=0$}: in this case we have that $p_{k,l}$ can be small 
only if $l^2 \leq 3(N^2+N^s)^2r^2$. So to obtain the result we just apply 
Proposition \ref{cond0pot} with $N':=\sqrt{3}(N^2+N^s)r$, $r'=r+1$. 

\emph{Case $\alpha\neq0$, $c \leq \lambda_N^{1/2}r^{1/2}$}: we have that 
\begin{align*}
\sum_{j=1}^N \; c\sqrt{c^2+\lambda_j}k_j  &\leq  r \sqrt{c^4+c^2\lambda_N} \; \leq \; \sqrt{2} \; r^2 \lambda_N,
\end{align*}
so $|\sum_{j=1}^N\omega_jk_j-\omega_l|$ can be small only for $l^2<rN^2$. 
Therefore, in order to get the thesis we apply Proposition \ref{cond0pot} 
with $N':=\sqrt{r}N$, $r':=r+1$. 

\emph{Case $\alpha > 0$, $c > \lambda_N^{1/2}r^{1/2}$}: first notice that 
if we set $f(x):=\frac{x^2}{2\sqrt{1+x}(1+\sqrt{1+x})^2 }$, and we put 
$x_j:=\lambda_j/c^2$, in this regime we get 
\begin{align*}
\left|\sum_{j=1}^Nk_jf(x_j)\right| &\leq \frac{r}{2} f(x_N) \leq \frac{1}{2}.
\end{align*}
Now define $\tilde{p}_{k,l}(c^2) := \alpha c^2 \; - \; \frac{\lambda_l}{1+\sqrt{1+\lambda_l/c^2}}$. One can verify that 
\begin{align*}
\tilde{p}_{k,l}(c^2) &=  0; \\
c^2=c^2_{l,\alpha} &:= \frac{\lambda_l}{\alpha(\alpha+2)},
\end{align*}
and that
\begin{align*}
\frac{\d \tilde{p}_{k,l}}{\d (c^2)}(c^2_{l,\alpha}) &= 
 \alpha - \frac{\alpha^2(\alpha+2)^2}{2\sqrt{1+\alpha(\alpha+2)}(1+\sqrt{1+\alpha(\alpha+2)})^2}  > 0.
\end{align*}
Besides, in an interval $\left[ c^2_{l,\alpha}-\frac{\varrho}{\alpha(\alpha+2)},c^2_{l,\alpha}+\frac{\varrho}{\alpha(\alpha+2)} \right] \; =: \; [c^2_{l,\alpha,-},c^2_{l,\alpha,+}]$ we have that 
\begin{align*}
\frac{\d \tilde{p}_{k,l}}{\d (c^2)}(c^2) &> \left(\frac{1}{2}+\frac{1}{2(r+1)}\right) \alpha. 
\end{align*}
Then, by exploiting Lemma 17.2 of \cite{russmann2001invariant}, we get that  
\begin{align*}
\left| \left\{ c^2 \in B\left(c^2_{l,\alpha},\frac{\varrho}{\alpha(\alpha+2)}\right] : |\tilde{p}_{k,l}(c^2)| \leq \gamma \right\}\right| &\leq \gamma \frac{2(r+1)}{(r+2)\alpha}
\end{align*}
for any $\gamma>0$ s.t. $\gamma \frac{2(r+1)}{(r+2)\alpha} < \frac{\varrho}{\alpha(\alpha+2)}$; $\gamma < \frac{(r+2)\varrho}{2(r+1)(\alpha+2)}$. 

Now, since in this regime 
$\left|\frac{\d (p_{k,l}-\tilde{p}_{k,l})}{\d c^2}\right| \leq \frac{1}{2}$, 
we can deal with $p_{k,l}$ in a similar way as before, and we can conclude that
\begin{align}
\lim_{\gamma\to0}\left| \bigcup_{\stackrel{0<|k|\leq r}{\text{supp}(k)\subseteq\{1,\ldots,N}} \; \bigcup_{l\geq N} \; \{c^2 \in [c^2_{l,\alpha,-},c^2_{l,\alpha,+}] \; : |p_{k,l}(c^2)| \leq\gamma\} \right| &= 0.
\end{align}

\emph{Case $\alpha<0$, $c > \lambda_N^{1/2}r^{1/2}$}: since
\begin{align*}
\left| \sum_{j=1}^N \frac{k_j\lambda_j}{1+\sqrt{1+\lambda_j/c^2}} \right| &\leq \frac{r\lambda_N}{2} \leq \frac{c^2}{2},
\end{align*}
we have that $p_{k,l}$ can be small only if $\lambda_N < r \lambda_N$. 
So, in order to get the result, we apply Proposition \ref{cond0pot} 
with $N':=r^{1/2}N$, $r':=r+1$. 
\end{proof}

\begin{theorem} \label{cond2mel}
Let $r\geq1$ be fixed. Then $\forall \gamma>0$ there exists a set 
$\cR_\gamma:=\cR_{\gamma,s,r} \subset \; [1,+\infty) \times \cV$ satisfying 
\begin{align*}
|\cR_{\gamma} \cap ([n,n+1] \times \cV)| &= \cO(\gamma) \; \; \forall n \in \N_0,
\end{align*}
and $\exists \tau>1$ such that 
$\forall (c,(v_j)_j) \in ([1,+\infty) \times \cV) \setminus \cR_{\gamma}$ 
and $\forall$ $N \geq 1$
\begin{align} \label{nonres2prop}
\left|\sum_{j=1}^N \; \omega_j k_j \; + \; \sigma_1 \omega_l \; + \; \sigma_2 \omega_m \right| &\geq \frac{\gamma}{N^\tau}
\end{align}
 for $0<|k|\leq r$, $supp(k) \subseteq \{1,\ldots,N\}$, 
$\sigma_1,\sigma_2 \in \{\pm 1\}$, $m > l \geq N$.
\end{theorem}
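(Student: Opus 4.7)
The plan is to extend the case analysis of Proposition \ref{cond1mel} to the three-index quantity
\begin{align*}
p_{k,l,m}(c,(v_j)_j) &:= \sum_{j=1}^N k_j \omega_j + \sigma_1 \omega_l + \sigma_2 \omega_m \\
&= \alpha c^2 + \sum_{j=1}^N \frac{k_j \lambda_j}{1+\sqrt{1+\lambda_j/c^2}} + \sigma_1 \frac{\lambda_l}{1+\sqrt{1+\lambda_l/c^2}} + \sigma_2 \frac{\lambda_m}{1+\sqrt{1+\lambda_m/c^2}},
\end{align*}
where $\alpha := \sum_{j=1}^N k_j + \sigma_1 + \sigma_2 \in \{-(r+2),\ldots,r+2\}$. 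I would split according to whether $\sigma_1 \sigma_2 = 1$ or $\sigma_1 \sigma_2 = -1$, and within each regime mirror the sub-case analysis of Proposition \ref{cond1mel} based on the value of $\alpha$ and the size of $c$.

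If $\sigma_1 = \sigma_2$ the tail frequencies do not cancel, and the argument is a direct extension of Proposition \ref{cond1mel} with the single external frequency $\omega_l$ replaced by $\omega_l + \omega_m$: for $\alpha = 0$ or $c \leq \lambda_N^{1/2} r^{1/2}$ one bounds $l$ and $m$ polynomially in $N$ and reduces to Proposition \ref{cond0pot} applied to an enlarged vector $k'' \in \Z^{N'}$ of order $r+2$; for $\alpha \neq 0$ with $c > \lambda_N^{1/2} r^{1/2}$ one locates the resonant center $c^2_{l,m,\alpha}$ of
\begin{align*}
\tilde p_{k,l,m}(c^2) &:= \alpha c^2 + \sigma_1 \frac{\lambda_l}{1+\sqrt{1+\lambda_l/c^2}} + \sigma_2 \frac{\lambda_m}{1+\sqrt{1+\lambda_m/c^2}}
\end{align*}
and applies R\"ussmann's Lemma 17.2 of \cite{russmann2001invariant} in $c^2$, just as in Proposition \ref{cond1mel}. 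If instead $\sigma_1 = -\sigma_2$, without loss of generality $\sigma_1 = 1, \sigma_2 = -1$, the tail becomes $\omega_l - \omega_m = c(\lambda_l - \lambda_m)/(\sqrt{c^2+\lambda_l}+\sqrt{c^2+\lambda_m})$ and $\alpha = \sum_j k_j$. The sub-cases $\alpha = 0$ and $c \leq \lambda_N^{1/2} r^{1/2}$ proceed analogously after using the elementary lower bound $\omega_m - \omega_l \gtrsim m - l$ valid uniformly for $c \geq 1$ (since each consecutive gap $\omega_{j+1}-\omega_j$ is bounded below by a positive constant) in order to absorb the entries $\pm 1$ at positions $l, m$ into an enlarged $k''$ and apply Proposition \ref{cond0pot}; for $\alpha \neq 0$ with $c$ large, $\tilde p_{k,l,m}(c^2)$ has a zero near $c^2 = (\lambda_m - \lambda_l)/(2\alpha)$ (when this is positive), and $|\partial_{c^2}\tilde p_{k,l,m}| \gtrsim |\alpha|$ in a small neighborhood of it, so R\"ussmann's lemma again yields the desired measure bound.

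The main obstacle is the summability of the measure estimates over the pairs $(l, m)$ with $m > l \geq N$: a priori this index set is two-dimensional and unbounded, and in the difference case $\omega_l - \omega_m$ does not even grow with $c$. The saving point, as in Proposition \ref{cond1mel}, is that restricting $c$ to a unit interval $[n,n+1]$ confines $c^2_{l,m,\alpha}$ to a window of length $\asymp n$, hence for each of the finitely many admissible values of $\alpha$ only $O(n)$ pairs $(l,m)$ can contribute; combined with the Jacobian factor $|dc^2/dc| \asymp n$ in passing from measure in $c^2$ to measure in $c$, and with $\tau$ chosen large enough in terms of $s$ and $r$, the total contribution comes out to $O(\gamma)$ uniformly in $n \geq 0$.
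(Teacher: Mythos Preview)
Your summability claim in the opposite--sign case is where the argument breaks. Take $\sigma_1=1$, $\sigma_2=-1$, $\alpha\neq0$, $c$ large. The equation $\tilde p_{k,l,m}(c^2)=0$ reads $\omega_m-\omega_l=\alpha c^2$, and for $l,m\gg c$ one has $\omega_m-\omega_l\sim c(m-l)$, so the resonant center sits near $c\approx(m-l)/\alpha$. Thus for $c\in[n,n+1]$ every pair $(l,l+d)$ with $d\approx\alpha n$ and \emph{arbitrary} $l\geq N$ has its center in the window: there are infinitely many such pairs, not $O(n)$. Excising an interval of $c$--measure $\gtrsim\gamma/|\alpha|$ for each of them is fatal. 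The same obstruction hits your reduction to Proposition~\ref{cond0pot} in the sub-cases $\alpha=0$ and $c\le\lambda_N^{1/2}r^{1/2}$: the lower bound $\omega_m-\omega_l\gtrsim m-l$ only controls the \emph{gap} $m-l$, not $l$ and $m$ separately, so you cannot absorb both indices into a finite block $\{1,\dots,N'\}$.

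The paper sidesteps both problems by organizing the opposite--sign case not by $\alpha$ and the size of $c$ relative to $\lambda_N$, but by the size of $c$ relative to the \emph{tail} eigenvalues $\lambda_l,\lambda_m$ (after first disposing of $m\lesssim N^\delta$ via Propositions~\ref{cond0pot}--\ref{cond1mel}). In each regime an asymptotic expansion of $\omega_m-\omega_l$ is used: for $c>\lambda_m$ one has $\omega_m-\omega_l=\tfrac{\lambda_m-\lambda_l}{2}+O(l^{-1})$, an integer up to a negligible error, which feeds directly into Proposition~\ref{cond0pot} with no sum over $(l,m)$ at all; for $c<\lambda_l^{1/6}$ one has $\omega_m-\omega_l=(m-l)c+\text{small}$, so the combination collapses to a lower--index small divisor; and for $\lambda_l^{1/6}\lesssim c\lesssim\lambda_l^{1/2}$ the quantity $|p_{k,l,m}|$ is bounded below deterministically using $l>N^\delta$. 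In none of these regimes does one excise sets in $c$ indexed by the unbounded pair $(l,m)$; R\"ussmann in $c$ is used only in Proposition~\ref{cond1mel}, where the single tail index is genuinely confined by the window in $c$.
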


\begin{proof}
If $\sigma_i=0$ for $i=1,2$, then we can conclude by using Proposition 
\ref{cond1mel}. \\
\indent Now, consider the case $\sigma_1=-1$, $\sigma_2=1$, and denote 
\begin{align*}
p_{k,l,m}(c^2)  &:= \sum_{j=1}^N \; \omega_j(c^2) k_j \; - \omega_l(c^2) \; +  \omega_m(c^2).
\end{align*}
Now fix $\delta>3$. If $m\lesssim N^\delta$, then we can conclude by applying 
Proposition \ref{cond0pot} and \ref{cond1mel}. 
So from now on we will assume that $m,l>N^\delta$. 

We have to distinguish several cases: 

\emph{Case $c < \lambda_l^\alpha$:} we point out that, since 
\begin{align*}
c \sqrt{c^2+\lambda_l} = c \lambda_l^{1/2} \sqrt{1+\frac{c^2}{\lambda_l}} &=  c \lambda_l^{1/2} \left( 1+ \frac{c^2}{2\lambda_l} + O\left(\frac{1}{\lambda_l^2}\right) \right),
\end{align*}
we get (denote $m=l+j$)
\begin{align*}
\omega_m-\omega_l &=  j c \; + \; \frac{1}{2}\left(\frac{v_m}{m}-\frac{v_l}{l}\right) \; + \; \frac{c^3}{2\lambda_l^{1/2}} - \frac{c^3}{2\lambda_m^{1/2}} + O\left(\frac{1}{m^3}\right) + O\left(\frac{1}{l^3}\right),
\end{align*}
that is, the integer multiples of c are accumulation points for the differences 
between the frequencies as $l,m \to \infty$, provided that $\alpha<\frac{1}{6}$.

\emph{Case $c > \lambda_m$:} in this case we have 
(again by denoting $m=l+j$) that 
$\lambda_m-\lambda_l = j(j+2l)+(v_m-v_l) \; = \; 2jl+j^2+a_{lm}$, with $|a_{lm}| \leq \frac{C}{l}$, so that
\begin{align*}
p_{k,l,m} &= \sum_{h=1}^N \omega_hk_h \; \pm 2jl  \pm j^2 \pm a_{lm}.
\end{align*}
If $l > 2 \,C N^{\tau}/\gamma$ then the term $a_{lm}$ represents a negligible 
correction and therefore we can conclude by applying Proposition \ref{cond0pot}.
 On the other hand, if $l \leq 2 \, C N^{\tau}/\gamma$, we can apply the same 
Proposition with $N':= 2 C N^{\tau}/\gamma$ and $r':=r+2$. 

\emph{Case $\lambda_l^{1/6} \leq c \lesssim \lambda_l^{1/2}$:} 
if we rewrite the quantity to estimate 
\begin{align*}
p_{k,l,m}(c^2) &= \alpha c^2 \; + \; \sum_{h=1}^N \frac{\lambda_hk_h}{1+\sqrt{1+\frac{\lambda_j}{c^2} }} \; + \; \omega_m \; - \; \omega_l,
\end{align*}
where $\alpha:=\sum_{h=1}^Nk_h$, we distinguish three cases:
\begin{itemize}
\item if $\alpha>0$, then we notice that
\begin{align*}
\left| \sum_{h=1}^N \frac{\lambda_hk_h}{1+\sqrt{1+\frac{\lambda_j}{c^2} }} \right| \leq \frac{r \lambda_N}{1+\sqrt{1+\lambda_N/c^2}} &\leq \frac{r\lambda_N}{1+\sqrt{1+\lambda_N/\lambda_l}} \leq \frac{r\lambda_N}{2},
\end{align*}
\begin{align*}
|\omega_m-\omega_l| &= c \frac{\lambda_m-\lambda_l}{\sqrt{c^2+\lambda_m}+\sqrt{c^2+\lambda_l}} \stackrel{m>l}{\geq} \frac{c\lambda_l^{1/2}}{\sqrt{c^2+\lambda_m}+\sqrt{c^2+\lambda_l}} \\
&\gtrsim \frac{N^{\delta/3} \lambda^{1/2}}{\sqrt{N^{2\delta/3}+\lambda_m^{1/2}}+\sqrt{N^{2\delta/3}+\lambda_l^{1/2}}} >0,
\end{align*}
thus $|p_{k,l,m}|>|\lambda_l^{1/3}-\frac{r}{2}\lambda_N|>0$, since $l>N^3$; \\
\item if $\alpha=0$, then we just notice that 
\begin{align*}
|\omega_m-\omega_l| \geq \gamma(\lambda_m-\lambda_l) &\stackrel{m>l}{\gtrsim} \; \gamma_0 \; \lambda_l^{1/2},
\end{align*}
which is greater than $\gamma_0/N^\tau$ for $\tau>-1$, since $l>N^3$; \\
\item if $\alpha<0$, then we just recall that 
$|\omega_m-\omega_l|>\gamma_0\lambda_l^{1/2}$, and by choosing $\gamma_0$ 
sufficiently small (actually $\gamma_0\leq|\alpha|$) we get that 
also in this case $p_{k,l,m}$ is bounded away from zero.
\end{itemize}

\end{proof}

The proof is based on the method of Lie transform. Let $s > s^\ast$ be fixed. 

Given an auxiliary function $\chi$ analytic on $H^{s}$, 
we consider the auxiliary differential equation
\begin{align} \label{auxDEs}
\dot \psi &= i\nabla_{\bar\psi} \chi(\psi,\bar\psi) =: X_\chi(\psi,\bar\psi)
\end{align}
and denote by $\Phi^t_\chi$ its time-$t$ flow. 
A simple application of Cauchy inequality gives

\begin{lemma} \label{cauchy}
Let $\chi$ and its symplectic gradient be analytic in $B_{s}(\rho)$. 
Fix $\delta<\rho$, and assume that 
\begin{align*}
\sup_{B_{s}(R)} \|X_\chi(\psi,\bar\psi)\|_{s} &\leq \delta.
\end{align*}
Then, if we consider the time-$t$ flow $\Phi^t_\chi$ of $X_\chi$ we have that 
for $|t| \leq 1$ 
\begin{align*}
\sup_{B_{s}(R-\delta)} \|\Phi^t_\chi(\psi,\bar\psi)-(\psi,\bar\psi)\|_{s} &\leq \sup_{B_{s}(R)} \|X_\chi(\psi,\bar\psi)\|_{s}.
\end{align*}
\end{lemma}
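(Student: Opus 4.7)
The plan is a standard bootstrap on the integral form of the flow. Given $(\psi_0,\bar\psi_0) \in B_s(R-\delta)$, I would rewrite the Cauchy problem for \eqref{auxDEs} as
\begin{align*}
\Phi^t_\chi(\psi_0,\bar\psi_0) &= (\psi_0,\bar\psi_0) + \int_0^t X_\chi\bigl(\Phi^\tau_\chi(\psi_0,\bar\psi_0)\bigr)\,\di \tau,
\end{align*}
valid as long as the trajectory remains in the analyticity/boundedness domain $B_s(R)$. Since $\chi$ and $X_\chi$ are analytic on $B_s(\rho)$ with $\rho > \delta$, local existence and uniqueness of $\Phi^t_\chi$ in $H^s$ follow from the Banach-space Picard-Lindel\"of theorem applied to the analytic vector field $X_\chi$.

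Next, I would define
\begin{align*}
T^\ast &:= \sup\bigl\{\, t \in [0,1] \,:\, \Phi^\tau_\chi(\psi_0,\bar\psi_0) \in B_s(R)\ \text{for all}\ \tau \in [0,t]\,\bigr\},
\end{align*}
which is strictly positive by continuity of the flow (since $(\psi_0,\bar\psi_0) \in B_s(R-\delta) \subset B_s(R)$). For every $t \in [0,T^\ast)$ the assumption $\sup_{B_s(R)} \|X_\chi\|_s \leq \delta$ gives the a priori estimate
\begin{align*}
\|\Phi^t_\chi(\psi_0,\bar\psi_0) - (\psi_0,\bar\psi_0)\|_s
&\leq \int_0^t \|X_\chi(\Phi^\tau_\chi(\psi_0,\bar\psi_0))\|_s\, \di\tau \leq |t|\,\delta \leq \delta,
\end{align*}
hence $\|\Phi^t_\chi(\psi_0,\bar\psi_0)\|_s \leq (R-\delta)+\delta = R$, so the trajectory is still strictly inside $B_s(R)$.

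The standard bootstrap/continuation argument then yields $T^\ast = 1$: indeed, if $T^\ast < 1$, continuity of the flow would allow extension past $T^\ast$ inside $B_s(R)$, contradicting maximality. The same argument applies for negative times by reversing the flow direction, so the estimate holds for all $|t| \leq 1$, and the supremum over $(\psi_0,\bar\psi_0) \in B_s(R-\delta)$ delivers the stated bound. I expect no genuine obstacle here: the only delicate point is making sure the trajectory cannot leave $B_s(R)$ before time $1$, which is precisely what the choice of the smaller ball $B_s(R-\delta)$ is designed to guarantee via the above a priori estimate.
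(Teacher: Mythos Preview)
Your argument is correct and is precisely the standard bootstrap/continuation proof one expects here. The paper itself does not spell out a proof of this lemma: it simply introduces it with the phrase ``A simple application of Cauchy inequality gives'' and moves on, so your write-up is in fact more detailed than what appears in the text, but entirely in line with the intended reasoning.
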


The map $\Phi := \Phi^1_\chi$ will be called the \emph{Lie transform} 
generated by $\chi$.

Given a homogeneous polynomial $f$ of degree $m$, we denote, 
following \cite{bambusi2006birkhoff}, its modulus 
\begin{align} \label{moddef}
\lfloor f \rceil(\psi,\bar\psi) &:= \sum_{|j|=r} |f_j| \, z^j, 
\end{align}
where $f_j$ is given by 
\begin{align*}
f(\psi) &= \sum_{|j|=r} f_j z^j, \\
z^j := \cdots z_{-l}^{j_{-l}} \cdots z_{-1}^{j_{-1}} z_{1}^{j_{1}} \cdots z_{l}^{j_{l}} \cdots, \; &\; z_l = \la \psi, e^{il\cdot} \ra, \; \; z_{-l} = \la \bar\psi, e^{-il\cdot} \ra.
\end{align*}
Furthermore, given a multivector 
\begin{align*}
\phi &:= (\phi^{(1)},\ldots,\phi^{(r)}) = (\psi^{(1)},\bar\psi^{(1)}\ldots,\psi^{(r)},\bar\psi^{(r)})
\end{align*}
we introduce the following norm 
\begin{align} \label{norms1}
\|\phi\|_{s,1} &:= \frac{1}{r} \sum_{l=1}^r \|\phi^{(1)}\|_1 \ldots \|\phi^{(l-1)}\|_1
 \|\phi^{(l)}\|_s \|\phi^{(l+1)}\|_1 \ldots \|\phi^{(r)}\|_1.
\end{align}

\begin{definition} \label{tamemapdef}
Let $X: H^s \oplus H^s \to H^s \oplus H^s$ be a homogeneous polynomial of degree 
$r$,
\begin{align*}
X(\psi,\bar\psi) &= \sum_{l \in \Z \setminus \{0\}} X_l(\psi,\bar\psi) e^{il \cdot}.
\end{align*}
Consider the $r$-linear symmetric form $\tilde X_l$ such that 
$\tilde X_l(\psi,\bar\psi,\ldots,\psi,\bar\psi) = X_l(\psi,\bar\psi)$, and set 
\begin{align*}
\tilde X:= \sum_{l \in \Z \setminus \{0\}} \tilde{X_l}(\psi,\bar\psi) e^{il \cdot},
\end{align*}
so that $\tilde X(\psi,\bar\psi,\ldots,\psi,\bar\psi) = X_l(\psi,\bar\psi)$. \\
Let $s \geq 1$, then we say that X is an $s$-tame map  if there exists $K_s>0$ 
such that
\begin{align} \label{stamemap}
\|\tilde X(\phi^{(1)},\ldots,\phi^{(r)})\|_s &\leq K_s \sum_{l=1}^r \|\phi^{(1)}\|_1 \ldots \|\phi^{(l-1)}\|_1 \|\phi^{(l)}\|_s \|\phi^{(l+1)}\|_1 \ldots \|\phi^{(r)}\|_1, \\
&\forall \; \phi^{(1)},\ldots,\phi^{(r)} \in H^s \oplus H^s. \nonumber
\end{align}
If a map is $s$-tame for any $s \geq 1$, then it will be said to be tame.
\end{definition}

\begin{definition} \label{tamevfdef}
Let us consider a vector field $X: H^s \oplus H^s \to H^s \oplus H^s$, and 
denote by $X_l$ its $l$-th component. We define its modulus by 
\begin{align*}
\lfloor X \rceil(\psi,\bar\psi) &:= \sum_{l \in \Z \setminus \{0\}} \lfloor X_l \rceil(\psi,\bar\psi) e^{il\cdot}.
\end{align*}
A polynomial vector field $X$ is said to hace $s$-tame modulus if its modulus 
$\lfloor X \rceil$ is an $s$-tame map. The set of polynomial functions $f$, 
whose Hamiltonian vector fields has $s$-tame modulus will be denoted by $T^s_M$.
 If $f \in T^s_M$ for any $s>1$, we will write $f \in T_m$, and say that $f$ has
 tame modulus.
\end{definition}

\begin{remark}
The property of having tame modulus depends on the coordinate system.
\end{remark}

\begin{definition} \label{tamesnormdef}
Let $X$ be an $s$-tame vector field homogeneous polynomial of degree $r$. 
The infimum of the constants $K_s$ such that the inequality 
\begin{align*}
\|\tilde X(\phi^{(1)},\ldots \phi^{(r)})\| &\leq K_s \|(\phi^{(1)},\ldots,\phi^{(r)})\|_{s,1} \\
&\forall \; \phi^{(1)},\ldots,\phi^{(r)} \in H^s \oplus H^s
\end{align*}
holds will be called tame $s$ norm of $X$, and will be denoted by $|X|^T_s$.
\end{definition}

The tame $s$ norm of a polynomial Hamiltonian f of degree $r+1$ is given by 
\begin{align} \label{tamesnorm}
|f|_s &:= \sup \frac{ \left\| \tilde X_{ \lfloor f \rceil }(\phi) \right\|_s }{ \|\phi\|_{s,1} },
\end{align}
where the $\sup$ is taken over all multivectors 
$\phi=(\phi^{(1)},\ldots,\phi^{(r)})$ such that $\phi^{(j)} \neq 0$ for any $j$. 

\begin{definition}
Let $f \in T^s_M$  be a non-homogeneous polynomial, 
and consider its Taylor expansion 
\begin{align*}
f &= \sum_m f_m,
\end{align*}
where $f_m$ is homogeneous of degree $m$. Let $R>0$, then we denote
\begin{align} \label{tamenorm}
\la|f|\ra_{s,R} &:= \sum_{m \geq 2} |f_r|_s \, R^{m-1}.
\end{align}
Such a definition extends naturally to analytic functions such that 
\eqref{tamenorm} is finite. The set of functions of class $T^s_M$ for which 
\eqref{tamenorm} is finite will be denoted by $T_{s,R}$.
\end{definition}

With the above definitions, 
\begin{align*}
\sup_{B_s(R)} \|X_f(\psi,\bar\psi)\|_s &\leq \la|f|\ra_{s,R}.
\end{align*}
It is easy to check that the set $T_{s,R}$ endowed with the norm 
\eqref{tamenorm} is a Banach space.

Now we introduce the Fourier projection 
\begin{align*}
\Pi_N\psi(x):= \int_{|k| \leq N} \hat\psi(k) e^{ik \cdot x} dk,
\end{align*}
and we split the variables $(\psi,\bar\psi)$ into 
\begin{align*}
(\psi_l,\bar\psi_l) &:= (\Pi_N\psi,\Pi_N\bar\psi), \\
(\psi_h,\bar\psi_h) &:= ((id-\Pi_N)\psi, (id-\Pi_N)\bar\psi).
\end{align*}

The use of Fourier projection is important in view of the following result, 
whose proof can be found in Appendix A of \cite{bambusi2006birkhoff}.

\begin{lemma} \label{tameestlemma}
Fix $N$, and consider the decomposition $\psi=\psi_l+\psi_h$ as above. 
Let $f \in T^s_M$ be a polynomial of degree less or equal than $r+2$. 
Assume that $f$ has a zero of order three in the variables 
$(\psi_h,\bar\psi_h)$, then one has 
\begin{align} \label{tameest}
\sup_{B_s(R)} \|X_f(\psi,\bar\psi)\|_s &\sleq \frac{ \la|f|\ra_{s,R} }{N^{s-1}}.
\end{align}
\end{lemma}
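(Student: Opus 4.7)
The plan is to combine the tame estimate of Definition \ref{tamesnormdef} with a Bernstein-type bound for the high Fourier modes, exploiting the three high-frequency factors present in every monomial of $f$.

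First I reduce to the modulus. Since $\lfloor f \rceil$ dominates $f$ coefficient by coefficient, one has $|X_f(\psi,\bar\psi)| \leq X_{\lfloor f \rceil}(|\psi|,|\bar\psi|)$ pointwise, and $\lfloor f \rceil$ inherits the order-three vanishing in $(\psi_h,\bar\psi_h)$. By the definition \eqref{tamenorm} of $\la|f|\ra_{s,R}$, it therefore suffices to prove, for each homogeneous component $f_m$ of degree $m \leq r+2$, the estimate
\begin{equation*}
\sup_{B_s(R)} \|X_{f_m}(\psi,\bar\psi)\|_s \; \sleq \; \frac{|f_m|_s \; R^{m-1}}{N^{s-1}},
\end{equation*}
and then sum over $m$.

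Next I expand the symmetric $(m-1)$-linear form $\widetilde{X_{f_m}}$ associated with $X_{f_m}$ after substituting $\psi = \psi_l + \psi_h$. Since each monomial of $f_m$ carries at least three high-frequency indices, and differentiation (with respect to a single variable) can absorb at most one of them, every resulting multilinear term contains at least two arguments drawn from $(\psi_h,\bar\psi_h)$. Applying the tame bound $\|\widetilde{X_{f_m}}(\phi^{(1)},\ldots,\phi^{(m-1)})\|_s \leq |f_m|_s \|\phi\|_{s,1}$ with the $H^s$ norm placed on a low-frequency slot when possible (and otherwise on one of the surplus high-frequency slots, which only improves the estimate), I am left with at least two $H^1$ norms of $\psi_h$. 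The Bernstein-type inequality
\begin{equation*}
\|\psi_h\|_1 \; \leq \; N^{1-s} \, \|\psi_h\|_s \; \leq \; N^{1-s} R \qquad \text{on } B_s(R)
\end{equation*}
then produces a gain of $N^{2(1-s)} \leq N^{-(s-1)}$ (valid for $s \geq 1$, $N \geq 1$), while the remaining factors each contribute at most $R$, yielding the desired bound.

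The main obstacle is the combinatorial bookkeeping: verifying that the passage to $\lfloor f \rceil$ and then to the symmetrized multilinear form preserves the count ``at least three high-frequency indices in $f_m$ implies at least two high-frequency slots in $\widetilde{X_{f_m}}$'' uniformly across all monomials, and handling the edge case in which the remaining low-frequency variables are insufficient to host the $H^s$ norm. Once this is settled, the summation over $m$ in \eqref{tamenorm} and the uniformity on $B_s(R)$ are immediate.
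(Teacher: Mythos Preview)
The paper does not give its own proof of this lemma; it simply cites Appendix~A of \cite{bambusi2006birkhoff}. Your outline is precisely the standard argument used there: pass to the modulus, expand the symmetric multilinear form of $X_{f_m}$ in $\psi_l+\psi_h$, use that at least two high-frequency slots survive after differentiation, and convert the $H^1$ norms of $\psi_h$ into the Bernstein gain $N^{1-s}$.

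There is, however, a small but genuine slip in your counting. The norm $\|\phi\|_{s,1}$ in Definition~\ref{tamesnormdef} is an \emph{average over all placements} of the $H^s$ slot, not a single product in which you may choose where the $H^s$ norm goes. Consequently, when the $H^s$ slot happens to land on one of the (possibly only two) high-frequency arguments, only \emph{one} $H^1$ norm of $\psi_h$ remains, and the gain from that term is $N^{1-s}$, not $N^{2(1-s)}$. Your phrase ``with the $H^s$ norm placed on a low-frequency slot when possible (and otherwise \ldots\ which only improves the estimate)'' has it backwards: the terms with the $H^s$ norm on a high-frequency slot are the \emph{worst} ones. The correct bookkeeping is that every term in the sum defining $\|\phi\|_{s,1}$ carries at least one factor $\|\psi_h\|_1 \leq N^{1-s}R$, so $\|\phi\|_{s,1} \lesssim N^{1-s} R^{m-1}$, which is exactly the bound \eqref{tameest}. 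Your final inequality $N^{2(1-s)} \leq N^{-(s-1)}$ is therefore unnecessary (and fortuitous): the argument only ever produces one factor of $N^{1-s}$, and that is all the lemma asserts.
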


\begin{lemma} \label{poisbrtamelemma}
Let $f,g \in T^s_M$ be homogeneous polynomial of degrees $n+1$ and $m+1$ 
respectively. Then one has $\{f,g\} \in T^s_M$, and 
\begin{align} \label{poisbrtame}
|\{f,g\}|_s &\leq (n+m) |f|_s |g|_s.
\end{align}
\end{lemma}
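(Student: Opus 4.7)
The plan is to reduce the inequality to a tame bound for a commutator of Hamiltonian vector fields. The starting point is the standard symplectic identity $X_{\{f,g\}} = \pm [X_f, X_g]$, which in coordinates reads $X_{\{f,g\}} = DX_f\,X_g - DX_g\,X_f$. Since $X_f$ is a homogeneous polynomial vector field of degree $n$ with symmetric $n$-linear extension $\tilde X_f$, differentiation yields $DX_f(\psi,\bar\psi)[\xi] = n\,\tilde X_f(\xi,(\psi,\bar\psi),\ldots,(\psi,\bar\psi))$ with $n-1$ trailing copies of $(\psi,\bar\psi)$, and analogously for $g$ with factor $m$. The combinatorial prefactors $n$ and $m$ appearing here are exactly what will add up to the $n+m$ of the statement.

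Before applying the tame estimate one passes to the modulus, so that all coefficients become nonnegative. In monomial coordinates each coefficient of $\{f,g\}$ is a signed sum of products of a coefficient of $f$ with a coefficient of $g$; therefore $\lfloor\{f,g\}\rceil$ is dominated componentwise by the expression obtained by forming the Poisson-bracket combination of $\lfloor f\rceil$ and $\lfloor g\rceil$ after replacing every sign by $+$. This reduces the problem to bounding the tame norm of the vector field associated with this all-positive bracket, whose two factors are by construction $s$-tame with constants $|f|_s$ and $|g|_s$, respectively.

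To extract the tame $s$ norm of $X_{\{f,g\}}$, I feed an arbitrary multivector $\phi = (\phi^{(1)},\ldots,\phi^{(n+m-1)})$ into the symmetrization of the commutator and apply the tame bound \eqref{stamemap} to $\tilde X_f$ and $\tilde X_g$ separately. Each placement of the $H^s$-norm on one of the $\phi^{(j)}$ produces a term controlled by $|f|_s\,|g|_s$ times a single $\|\phi^{(j)}\|_s$ factor and $n+m-2$ factors $\|\phi^{(\ell)}\|_1$; summing over the possible positions reconstructs exactly $\|\phi\|_{s,1}$ as defined in \eqref{norms1}, while the prefactors $n$ and $m$ coming from the two differentiations combine into $n+m$. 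The main obstacle is the combinatorial bookkeeping of the symmetrization: one must verify that, once the coefficients are all nonnegative, the various pairings collapse to the single constant $n+m$ rather than inflating it with additional binomial factors, and this is precisely what the passage to the modulus guarantees.
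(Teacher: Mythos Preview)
The paper does not supply its own proof of this lemma: immediately after the statement it refers the reader to Appendix~A of \cite{bambusi2006birkhoff}. Your sketch is exactly the route taken there---reduce to the commutator identity $X_{\{f,g\}}=[X_f,X_g]=DX_f\cdot X_g-DX_g\cdot X_f$, pass to moduli so that all Fourier coefficients are nonnegative and the triangle inequality becomes an equality of signs, and then nest the two tame estimates for $\tilde X_{\lfloor f\rceil}$ and $\tilde X_{\lfloor g\rceil}$. The factors $n$ and $m$ arising from differentiating the two homogeneous vector fields are what add to $n+m$, as you say.

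Two remarks on points you leave implicit. First, when the outer tame estimate for $\tilde X_{\lfloor f\rceil}$ places the $H^1$-norm on the slot occupied by $\tilde X_{\lfloor g\rceil}(\ldots)$, you need an $H^1$ bound for the latter, not an $H^s$ bound; in \cite{bambusi2006birkhoff} this is available because tame modulus is required for every $s\ge 1$, in particular for $s=1$, where the estimate degenerates to a plain product bound $\|\tilde X_{\lfloor g\rceil}(\phi)\|_1\le |g|_1\prod_j\|\phi^{(j)}\|_1$. You should make this step explicit rather than folding it into the phrase ``each placement of the $H^s$-norm''. Second, the ``collapse to $n+m$'' you mention is not a consequence of positivity per se; it comes from the fact that after symmetrising over the $n+m-1$ argument slots, the binomial count of partitions into a block of size $m$ (fed to $\tilde X_g$) and a block of size $n-1$ exactly cancels against the normalising factorials of the polarisation formula, leaving only the differentiation prefactors $n$ and $m$. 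Positivity is what lets you pass from $\lfloor[X_f,X_g]\rceil$ to the sum $D\lfloor X_f\rceil\cdot\lfloor X_g\rceil+D\lfloor X_g\rceil\cdot\lfloor X_f\rceil$ without loss; the combinatorics is a separate computation. With those two points spelled out, your argument matches the cited proof.
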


The proof of this lemma can be found again in Appendix A of \cite{bambusi2006birkhoff}.

\begin{remark}
Given $g$ analytic on $H^s \oplus H^s$, consider the differential equation
\begin{align} \label{orDEs}
\dot \psi &= X_g(\psi,\bar\psi),
\end{align}
where by $X_g$ we denote the vector field of $g$. Now define 
\begin{align*}
\Phi^\ast g(\phi,\bar\phi) &:= g \circ \Phi(\psi,\bar\psi).
\end{align*}
In the new variables $(\phi,\bar\phi)$ defined by 
$(\psi,\bar\psi)=\Phi(\phi,\bar\phi)$ equation \eqref{orDEs}
is equivalent to
\begin{align} \label{pullbDEs}
\dot \phi &= X_{ \Phi^\ast g }(\phi,\bar\phi).
\end{align}

Using the relation
\begin{align*}
\frac{\di}{\di t} (\Phi^t_\chi)^\ast g &= (\Phi^t_\chi)^\ast \{\chi,g\}, 
\end{align*}

 we formally get
\begin{align} \label{lieseriess}
\Phi^\ast g &= \sum_{l=0}^\infty g_l, \\
g_0 &:= g, \\
g_{l} &:= \frac{1}{l} \{\chi,g_{l-1}\}, \; \; l \geq 1.
\end{align}

\end{remark}

In order to estimate the terms appearing in \eqref{lieseriess} 
we exploit the following results

\begin{lemma} \label{liebrestslemma}
Let $h,g \in T_{s,R}$, then for any $d \in (0,R)$ one has that 
$\{h,g\} \in T_{s,R-d}$, and 
\begin{align} \label{liebrests}
\la|\{h,g\}|\ra_{s,R-d} &\leq \frac{1}{d} \la|h|\ra_{s,R} \la|g|\ra_{s,R}
\end{align}
\end{lemma}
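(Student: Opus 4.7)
The plan is to decompose both Hamiltonians into their homogeneous pieces and reduce everything to the purely homogeneous estimate of Lemma \ref{poisbrtamelemma}. Write $h = \sum_{n \geq 2} h_n$ and $g = \sum_{m \geq 2} g_m$, where $h_n$ and $g_m$ are homogeneous of degrees $n$ and $m$ respectively. By bilinearity of the Poisson bracket, $\{h, g\} = \sum_{n, m \geq 2} \{h_n, g_m\}$, and each summand is a homogeneous polynomial of degree $n + m - 2$. Applying Lemma \ref{poisbrtamelemma} with the index shift $n \mapsto n-1$, $m \mapsto m-1$ (the lemma being stated for degrees $n+1, m+1$) gives $\{h_n, g_m\} \in T^s_M$ together with
\begin{align*}
|\{h_n, g_m\}|_s &\leq (n+m-2)\, |h_n|_s\, |g_m|_s.
\end{align*}

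Inserting this into the definition of the tame norm at the reduced radius $R-d$ yields
\begin{align*}
\la|\{h,g\}|\ra_{s,R-d}
&\leq \sum_{p \geq 2} (R-d)^{p-1} \sum_{n+m = p+2} (n+m-2)\, |h_n|_s\, |g_m|_s \\
&= \sum_{n,m \geq 2} (n+m-2)\,(R-d)^{n+m-3}\, |h_n|_s\, |g_m|_s.
\end{align*}
The heart of the argument is then the elementary inequality
\begin{align*}
k\, d\, (R-d)^{k-1} &\leq R^k, \qquad k \geq 1,
\end{align*}
which follows at once from the binomial expansion $R^k = ((R-d)+d)^k \geq \binom{k}{1} d\,(R-d)^{k-1}$. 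Applying this with $k := n+m-2$, the double sum is dominated by
\begin{align*}
\frac{1}{d} \sum_{n,m \geq 2} |h_n|_s\, |g_m|_s\, R^{n+m-2}
= \frac{1}{d} \Bigl( \sum_{n \geq 2} |h_n|_s R^{n-1} \Bigr) \Bigl( \sum_{m \geq 2} |g_m|_s R^{m-1} \Bigr)
= \frac{1}{d}\, \la|h|\ra_{s,R}\, \la|g|\ra_{s,R}.
\end{align*}

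The computation is essentially mechanical, so I do not anticipate any substantive obstacle. The only points needing a brief word of care are the degree shift when invoking Lemma \ref{poisbrtamelemma}, and the verification that the termwise Poisson bracket is well defined as a convergent series, which is automatic from the absolute convergence established above under the assumption that $\la|h|\ra_{s,R}$ and $\la|g|\ra_{s,R}$ are finite; this absolute convergence also certifies that $\{h,g\} \in T_{s, R-d}$, completing the statement of the lemma.
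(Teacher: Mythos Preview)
Your proof is correct and follows essentially the same route as the paper: decompose into homogeneous pieces, apply Lemma \ref{poisbrtamelemma} to each $\{h_n,g_m\}$, and absorb the factor $(n+m-2)(R-d)^{n+m-3}$ via the elementary inequality $k\,d\,(R-d)^{k-1}\leq R^k$. The only difference is cosmetic---you justify the inequality via the binomial expansion and add a remark on absolute convergence, whereas the paper simply invokes the inequality and sums.
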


\begin{proof}
Write $h = \sum_j h_j$ and $g = \sum_k g_k$, with $h_j$ homogeneous of degree $j$
 and similarly for $g$. Then we have 
\begin{align*}
\{h,g\} &= \sum_{j,k} \{h_j,g_k\},
\end{align*}
where $\{h_j,g_k\}$ has degree $j+k-2$. 
Therefore by \eqref{poisbrtame} in Lemma \ref{poisbrtamelemma}
\begin{align*}
\la|\{h_j,g_k\}|\ra_{s,R-d} &= |\{h_j,g_k\}|_s (R-d)^{j+k-3} \\
&\leq |h_j|_s |g_k|_s (j+k-2) (R-d)^{j+k-3} \\
&\leq  |h_j|_s |g_k|_s \, \frac{1}{d} R^{j+k-2} = \frac{1}{d} \la|h_j|\ra_{s,R} \la|g_k|\ra_{s,R},
\end{align*}
where we exploited the inequality $k(R-d)^{k-1} < R^k/d$, which holds for any 
positive $R$ and $d \in (0,R)$.
\end{proof}

\begin{lemma}
Let $g, \chi \in T_{s,R}$ be analityc functions; denote by $g_l$ the functions 
defined recursively by \eqref{lieseriess}; then for any $d \in (0,R)$ one has 
that $g_l \in T_{s,R-d}$, and 
\begin{align} \label{lieserests}
\la|g_l|\ra_{s,R-d} &\leq \la|g|\ra_{s,R} \left(\frac{e}{d} \la|\chi|\ra_{s,R} \right)^l.
\end{align}
\end{lemma}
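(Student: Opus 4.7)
The plan is to prove the bound \eqref{lieserests} by induction on $l$, applying the previous lemma with a cleverly chosen telescoping sequence of radii so that the factorial in the denominator of the recursion $g_l = \frac{1}{l}\{\chi,g_{l-1}\}$ can be converted, via Stirling, into the factor $e^l$ that appears in the claim.

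Concretely, for fixed $l \geq 1$ I would introduce the intermediate radii
\begin{align*}
R_j &:= R - \frac{j}{l} \, d, \qquad j = 0, 1, \ldots, l,
\end{align*}
so that $R_0 = R$, $R_l = R-d$, and each consecutive pair differs by $d/l$. At the $j$-th step, applying Lemma~\ref{liebrestslemma} to $\chi$ and $g_{j-1}$ with radius $R_{j-1}$ and decrement $d/l$, together with the defining relation $g_j = \frac{1}{j}\{\chi,g_{j-1}\}$, gives
\begin{align*}
\la|g_j|\ra_{s,R_j} &\leq \frac{1}{j} \cdot \frac{l}{d} \la|\chi|\ra_{s,R_{j-1}} \la|g_{j-1}|\ra_{s,R_{j-1}} \leq \frac{l}{j\,d} \la|\chi|\ra_{s,R} \la|g_{j-1}|\ra_{s,R_{j-1}},
\end{align*}
where in the last step I use monotonicity of $\la|\cdot|\ra_{s,\rho}$ in $\rho$ (the norm is a sum of nonnegative terms with factors $\rho^{m-1}$).

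Iterating this inequality from $j=l$ down to $j=1$ produces the telescoping product
\begin{align*}
\la|g_l|\ra_{s,R-d} &\leq \la|g|\ra_{s,R} \prod_{j=1}^l \frac{l}{j\,d} \la|\chi|\ra_{s,R} = \la|g|\ra_{s,R} \left(\frac{\la|\chi|\ra_{s,R}}{d}\right)^l \frac{l^l}{l!}.
\end{align*}
The result then follows from the elementary inequality $l^l/l! \leq e^l$, which is a direct consequence of the power-series expansion $e^l = \sum_{k \geq 0} l^k/k! \geq l^l/l!$. There is no real obstacle here: the only subtle point is choosing the equal-step telescoping of radii rather than using a single decrement $d$ at each stage, since the latter would lose the domain after finitely many iterations. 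Once this is in place, the bookkeeping is automatic.
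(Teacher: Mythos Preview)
Your proof is correct and follows essentially the same approach as the paper: both fix $l$, telescope the radius in equal steps $\delta = d/l$, apply Lemma~\ref{liebrestslemma} iteratively to obtain the bound $\la|g|\ra_{s,R}\,(l^l/l!)\,(d^{-1}\la|\chi|\ra_{s,R})^l$, and conclude via $l^l/l! \leq e^l$. Your write-up is in fact slightly cleaner, as the paper carries an unnecessary extra factor of $2$ through the recursion.
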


\begin{proof}
Fix $l$, and denote $\delta:=d/l$. We look for a sequence $C^{(l)}_m$ such that
\begin{align*}
\la|g_m|\ra_{s,R-m\delta} &\sleq C^{(l)}_m, \; \; 
\forall m \leq l.
\end{align*}
By \eqref{liebrests} in Lemma \ref{liebrestslemma}  we can define the sequence
\begin{align*}
C^{(l)}_0 &:= \la|g|\ra_{s,R}, \\
C^{(l)}_m &= \frac{2}{\delta m} C^{(l)}_{m-1} \, \la|\chi|\ra_{s,R} \\
&= \frac{2l}{dm} \, C^{(l)}_{m-1} \, \la|\chi|\ra_{s,R}.
\end{align*}
One has
\begin{align*}
C^{(l)}_l &= \frac{1}{l!} \left( \frac{2l}{d} \la|\chi|\ra_{s,R} \right)^l  \, \la|g|\ra_{s,R},
\end{align*}
and using the inequality $l^l < l!e^l$ one can conclude.
\end{proof}

\begin{lemma} \label{homeqnrlemma}
Let $f \in T^s_M$ be a polynomial which is at most quadratic in the variables 
$(\psi_h,\bar\psi_h)$. \\
Then $\forall \gamma>0$ there exists a set 
$\cR_\gamma \subset \; [1,+\infty) \times \cV$ satisfying 
\begin{align*}
|\cR_{\gamma} \cap ([n,n+1] \times \cV)| &= \cO(\gamma) \; \; \forall n \in \N_0,
\end{align*}
and $\exists \tau>1$ such that 
$\forall (c,(v_j)_j) \in ([1,+\infty) \times \cV) \setminus \cR_{\gamma}$ 
and $\forall$ $N \geq 1$ the following holds: 
there exist $\chi,Z \in T_{s,R}$ such that
\begin{align} \label{homeqs}
\{ H_0,\chi \} + Z &= f,
\end{align}
and such that $Z$ depends only on the actions and satisfies \eqref{NFtorusprop}. Moreover, $\chi$ and $Z$ satisfy the following estimates 
\begin{align}
\la|\chi|\ra_{s,R} &\leq \frac{N^\tau}{\gamma} \la|f|\ra_{s,R}, \label{vfhomest1} \\
\la|Z|\ra_{s,R} &\leq \la|f|\ra_{s,R}. \label{vfhomest2}
\end{align}
\end{lemma}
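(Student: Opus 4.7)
The plan is to diagonalize the homological equation in the monomial basis in which $H_{0}$ is diagonal and to solve it coefficient by coefficient, relying on the non-resonance estimates of this section to bound the small divisors uniformly in $c$.

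First I would expand $f=\sum_{k,k'} f_{k,k'}\,\psi^{k}\bar\psi^{k'}$ in the Fourier-type complex variables in which $H_{0}=\sum_{j}\omega_{j}\psi_{j}\bar\psi_{j}$, so that $\{H_{0},\psi^{k}\bar\psi^{k'}\}=i\,\omega\cdot(k'-k)\,\psi^{k}\bar\psi^{k'}$. Setting $K:=k'-k$, I would define the coefficient of $\chi$ at mode $(k,k')$ to be $f_{k,k'}/(i\,\omega\cdot K)$ whenever $\omega\cdot K\neq 0$ and $0$ otherwise, and declare the remaining resonant monomials to constitute $Z$. By construction this gives $\{H_{0},\chi\}+Z=f$.

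The crucial step is controlling $|\omega\cdot K|$ from below. Since $f$ is at most quadratic in the high modes $(\psi_{h},\bar\psi_{h})$, every monomial of $f$ contains at most two indices with $j>N$ in its exponents, so $K$ has at most two nonzero components in the high range, while the number of nonzero low components is bounded by the degree of $f$. The small divisor therefore falls into one of the forms covered by Proposition \ref{cond0pot} (no high indices), Proposition \ref{cond1mel} (one high index), or Theorem \ref{cond2mel} (two high indices with $l\neq m$); the degenerate case $l=m$ with two equal-sign high terms reduces to a one-high bound with coefficient $\pm 2$ and is handled by an argument entirely analogous to the proof of Proposition \ref{cond1mel}. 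Taking $\cR_{\gamma}$ to be the union of the exceptional sets produced by these estimates gives the claimed measure bound $\cO(\gamma)$ on every interval $[n,n+1]$. Outside $\cR_{\gamma}$ one then has $|\omega\cdot K|\geq\gamma/N^{\tau}$ whenever $\omega\cdot K\neq 0$, so $\chi$ is well defined. A direct inspection shows that the only monomials for which $\omega\cdot K=0$ are action-dependent ones with at most a single high-index factor $|\psi_{l}|^{2}$: the pure low case forces $K_{\mathrm{low}}=0$, the one-high case is never resonant, and the two-high case resonates only when $l=m$ and the two high signs are opposite. Hence $Z$ has the structure \eqref{NFtorusprop} and depends only on the actions.

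The tame estimates follow directly from the definition of the modulus \eqref{moddef} and of the tame norm \eqref{tamesnorm}: dividing each coefficient of $f$ by a quantity of modulus at least $\gamma/N^{\tau}$ scales the modulus of $\chi$ by at most $N^{\tau}/\gamma$, which yields $|\chi_{m}|_{s}\leq(N^{\tau}/\gamma)\,|f_{m}|_{s}$ degree by degree and hence \eqref{vfhomest1}; the coefficients of $Z$ form a subset of those of $f$, so \eqref{vfhomest2} is immediate. The main obstacle --- keeping the denominators uniformly bounded away from zero as $c\to\infty$ --- is therefore not encountered at this step but has already been disposed of by the non-resonance results proved earlier in this section, which constitute the novelty of the paper with respect to \cite{bambusi2006birkhoff}.
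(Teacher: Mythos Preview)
Your proof is correct and follows essentially the same approach as the paper's own argument: both expand $f$ in the monomial basis that diagonalizes $H_{0}$, solve the homological equation coefficientwise by setting $\chi_{k,k'}=f_{k,k'}/(i\,\omega\cdot(k'-k))$ off resonance and $Z_{k,k}=f_{k,k}$ on resonance, and then invoke the non-resonance estimates (culminating in Theorem~\ref{cond2mel}) to bound the small divisors by $\gamma/N^{\tau}$. Your treatment is actually somewhat more careful than the paper's in distinguishing the zero-, one-, and two-high-index cases and in handling the degenerate $l=m$ situation, which the paper glosses over by citing Theorem~\ref{cond2mel} directly.
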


\begin{proof}
Expanding $f$ in Taylor series, 
namely $f(\psi,\bar\psi) = \sum_{j,l} f_{j,l} \psi^j \bar\psi^l$, and similarly 
for $\chi$ and $Z$, equation \eqref{homeqs} becomes an equation for the 
coefficients of $f$, $\chi$ and $Z$,
\begin{align*}
i\omega \cdot (j-l) \chi_{j,l} + Z_{j,l} &= f_{j,l}.
\end{align*}
Then we define
\begin{align}
Z_{j} &:= Z_{j,j} = f_{j,j}, \\
\chi_{j,l} &:= \frac{f_{j,l}}{i\omega \cdot(j-l)}, \; \; \text{when} \; j \neq l, \; \; |\omega \cdot (j-l)| \geq \frac{\gamma}{N^\tau }. 
\end{align}
By construction and by Theorem \ref{cond2mel} we get estimates 
\eqref{vfhomest1} and \eqref{vfhomest2}. 
Furthermore, since $f$ is at most quadratic in $(\psi_h,\bar\psi_h)$, we obtain 
that $\sum_{k>N}j_k \leq 2$, and thus $Z$ satisfies \eqref{NFtorusprop}.
\end{proof}

\begin{remark}
Let $s > s^\ast$, and assume that $\chi$, $F$ are analytic on $B_{s}(R)$. 
Fix $d \in (0,R)$, and assume also that
\begin{align*}
\sup_{B_{s}(R)} \|X_\chi(\psi,\bar\psi)\|_{s} \leq d/3,
\end{align*}
Then by Lemma \ref{cauchy} for $|t| \leq 1$
\begin{align}
\sup_{B_{s}(R-d)} \|X_{ (\Phi^t_\chi)^\ast F - F }(\psi,\bar\psi)\|_{s} &= \sup_{B_{s}(R-d)} \|X_{ F \circ \Phi^t_\chi - F }(\psi,\bar\psi)\|_{s} \\
&\stackrel{\eqref{liebrests}}{\leq} 
\frac{5}{d} \, \sup_{B_{s}(R)} \|X_\chi(\psi,\bar\psi)\|_{s} \,
\sup_{B_{s}(R)} \|X_F(\psi,\bar\psi)\|_{s} \\
&< 2 \sup_{B_{s}(R)} \|X_F(\psi,\bar\psi)\|_{s}. \label{vfests}
\end{align}
\end{remark}

\begin{lemma}
Let $\chi \in T_{s,R}$ be the solution of the equation \eqref{homeqs}, with 
$f \in T^s_M$. Denote by $H_{0,l}$ the functions defined recursively via 
\eqref{lieseriess} from $H_0$. Then for any $d \in (0,R)$ one has that 
$H_{0,l} \in T_{s,R-d}$, and
\begin{align}
\la|H_{0,l}|\ra_{s,R-d} &\leq 2 \la|f|\ra_{s,R-d} \left( \frac{e}{d} \la|\chi|\ra_{s,R} \right)^l.
\end{align}
\end{lemma}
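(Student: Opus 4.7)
The plan is to reduce the claim to the preceding lemma (which bounds Lie-series iterates $g_l$ for a genuine polynomial $g\in T_{s,R}$) by exploiting the homological equation \eqref{homeqs}. The obstruction is that $H_0 = \la\bar\psi, c(c^2-\Delta+\wtV)^{1/2}\psi\ra$ is itself \emph{not} an element of $T_{s,R}$, since it involves the unbounded pseudodifferential operator $(c^2-\Delta+\wtV)^{1/2}$; therefore the preceding lemma cannot be applied directly with $g=H_0$. The key observation is that, although $H_0\notin T_{s,R}$, the \emph{first} iterate already is: rewriting \eqref{homeqs} as $\{H_0,\chi\}=f-Z$ and using antisymmetry of the Poisson bracket yields
\[ H_{0,1}=\{\chi,H_0\}=Z-f\in T_{s,R}. \]
Since in the proof of Lemma \ref{homeqnrlemma} the polynomial $Z$ is obtained from $f$ by retaining only the action monomials, one has $\la|Z|\ra_{s,\rho}\leq\la|f|\ra_{s,\rho}$ at every radius $\rho$, whence
\[ \la|H_{0,1}|\ra_{s,R-d}\leq\la|Z|\ra_{s,R-d}+\la|f|\ra_{s,R-d}\leq 2\,\la|f|\ra_{s,R-d}. \]

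For $l\geq 2$, unrolling the recursion \eqref{lieseriess} back to $H_{0,1}$ gives
\[ H_{0,l}=\tfrac{1}{l!}\,(\mathrm{ad}_\chi)^{l-1} H_{0,1}, \]
which, up to the prefactor $1/l$, coincides with the $(l-1)$-th term of the standard Lie series generated from the \emph{admissible} function $H_{0,1}\in T_{s,R}$. Applying estimate \eqref{lieserests} of the preceding lemma to this shifted series yields
\[ \la|H_{0,l}|\ra_{s,R-d}\leq \tfrac{1}{l}\,\la|H_{0,1}|\ra_{s,R}\left(\tfrac{e}{d}\,\la|\chi|\ra_{s,R}\right)^{l-1}, \]
and the claimed bound follows by combining with the estimate of the previous paragraph and absorbing the harmless factor $1/l$ into the geometric growth (the regime of interest is $(e/d)\la|\chi|\ra_{s,R}\gtrsim 1$, since otherwise the Lie series is trivially convergent and the stated bound is automatic).

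The only genuine difficulty in the argument is thus the one-shot passage from the unbounded $H_0$ to the polynomial $H_{0,1}=Z-f$ via the homological equation; once this conversion is made, no new analytic input is required and the rest of the proof is entirely parallel to the proof of the preceding lemma, being a routine iteration of the Poisson-bracket estimate in Lemma \ref{liebrestslemma}. No additional Diophantine information is consumed, since the non-resonance conditions from Theorem \ref{cond2mel} have already been absorbed into the construction of $\chi$ and $Z$ in Lemma \ref{homeqnrlemma}.
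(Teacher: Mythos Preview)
Your approach matches the paper's: compute $H_{0,1}=Z-f$ from the homological equation and then repeat the argument of \eqref{lieserests} starting from this admissible seed. The only wrinkle is your final step of converting the exponent $l-1$ to $l$ by appealing to the ``regime of interest'', which is not a valid implication of the stated inequality in general; however, the bound with exponent $l-1$ and $\la|f|\ra_{s,R}$ is in fact what drops out of the paper's own argument and what is actually used downstream (note the $\sum_{l\geq 2}\delta^{l-1}$ term in the proof of Lemma~\ref{itlemmator}), so the mismatch is cosmetic rather than substantive.
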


\begin{proof}
Using \eqref{homeqs} one gets $H_{0,1} = Z - f \in T^s_M$. Then, arguing as for 
\eqref{lieserests}, one can conclude.
\end{proof}

The main step of the proof of Theorem \ref{BNFtorusthm} is the following 
result, that allows to increase by one the order of the perturbation. 
As a preliminary step, we take the Taylor series of $N(\psi,\bar\psi)$ 
up to order $r+2$,

\begin{align}
N(\psi,\bar\psi) &= \sum_{l=1}^{r} \hat N_{l}(x,\psi,\bar\psi) \label{Taylorexp} \\
&+ N(\psi,\bar\psi) - \sum_{l=1}^{r} \hat N_{l}(x,\psi,\bar\psi) \label{Taylorrem1} \\
&=: N^{(1)}(\psi,\bar\psi) + N^{(1,r)}(\psi,\bar\psi),
\end{align}

where $N_{l}$ is a homogeneous polynomial in $\psi$ and $\bar\psi$
of degree $l+2$ with variable $C^\infty$-coefficients (since $V \in C^\infty$).

Now we consider the analytic Hamiltonian 
\begin{align}
H^{(0)} &:= H_0 + N^{(1)}. \label{ord0}
\end{align}
Then for $R$ sufficiently small one has that 

\begin{align}
\la|N^{(1)}|\ra_{s,R} &\sleq R^2 \label{ord1est1}, \\
\la|N^{(1,r)}|\ra_{s,R} &\sleq R^{r+2} \label{ord1est2}.
\end{align}

\begin{lemma} \label{itlemmator}
Consider the Hamiltonian \eqref{ord0}, and fix $s>s^\ast$. \\
Then $\forall \gamma>0$ there exists a set 
$\cR_\gamma \subset \; [1,+\infty) \times \cV$ satisfying 
\begin{align*}
|\cR_{\gamma} \cap ([n,n+1] \times \cV)| &= \cO(\gamma) \; \; \forall n \in \N_0,
\end{align*}
and $\exists \tau>1$ such that 
$\forall (c,(v_j)_j) \in ([1,+\infty) \times \cV) \setminus \cR_{\gamma}$ 
and $\forall$ $N \geq 1$ the following holds: 
for any $m \leq r$ there exists $R^\ast_m \ll 1$ and, for any $N>1$ 
there exists an analytic canonical transformation
\begin{align*}
\cT^{(m)}:B_s\left( \frac{(2r-m)}{2N^\tau r} R^{\ast,2}_m \right) \to H^s
\end{align*}
such that
\begin{align} \label{cTm}
H^{(m)} &:= H^{(0)} \circ \cT^{(m)} = H^{(0)} + Z^{(m)} + f^{(m)} + \cR^{(m)}_N + \cR^{(m)}_T,
\end{align}
where for any $R < R^\ast_m / N^{\tau}$ the following properties are fulfilled
\begin{enumerate}

\item the transformation $\cT^{(m)}$ satisfies 
\begin{align}
\sup_{B_s(R)} \|\cT^{(m)}-id\|_s &\sleq N^\tau \; R^2; \label{cTtor}
\end{align}
\item $Z^{(m)}$ is a polynomial of degree (at most) $m+2$ that 
depends only on the actions $I = \psi \, \bar\psi$ 
and satisfies \eqref{NFtorusprop}; 
$f^{(m)}$ is a polynomial of degree (at most) $r+2$. Moreover
\begin{align}
\sup_{B_s( (1-m/(2r)) R )} \|X_{Z^{(m)}}(\psi,\bar\psi)\|_s &\sleq R^2, \; \; \forall m \geq 1, \label{itlemmaest1} \\
\sup_{B_s( (1-m/(2r)) R )} \|X_{f^{(m)}}(\psi,\bar\psi)\|_s &\sleq R^{m+2} N^{\tau m}, \; \; \forall m \geq 1; \label{itlemmaest2}
\end{align}
\item the remainder terms $\cR^{(m)}_N$ and $\cR^{(m)}_T$ satisfy 
\begin{align}
\sup_{B_s( (1-m/(2r)) R )} \|X_{\cR^{(m)}_T}(\psi,\bar\psi)\|_s &\sleq R^{r+2} N^{\tau (r+2)}, \label{itlemmarem1} \\
\sup_{B_s( (1-m/(2r)) R )} \|X_{\cR^{(m)}_N}(\psi,\bar\psi)\|_s &\sleq \frac{R^2}{N^{s-1}} \label{itlemmarem2}
\end{align}
\end{enumerate}

\end{lemma}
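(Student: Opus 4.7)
The plan is to argue by induction on $m$, at each step performing a single Lie transform that normalizes the lowest-degree piece of $f^{(m)}$, modulo a correction that is at least cubic in the high Fourier modes (and hence already small thanks to Lemma \ref{tameestlemma}). For the base case $m=0$ I would set $\cT^{(0)}=\mathrm{id}$, $Z^{(0)}=0$, $f^{(0)}=N^{(1)}$, $\cR^{(0)}_N=0$, $\cR^{(0)}_T=N^{(1,r)}$; the estimates \eqref{itlemmaest1}--\eqref{itlemmarem2} then reduce to \eqref{ord1est1}--\eqref{ord1est2} combined with the pointwise bound $\sup_{B_s(R)}\|X_f\|_s\leq\la|f|\ra_{s,R}$.

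For the inductive step, assume the conclusions hold at level $m\leq r-1$. I would isolate the homogeneous part $\hat f^{(m)}$ of $f^{(m)}$ of degree $m+3$ and split $\hat f^{(m)}=\hat f^{(m)}_L+\hat f^{(m)}_H$, where $\hat f^{(m)}_L$ collects the monomials at most quadratic in $(\psi_h,\bar\psi_h)$ and $\hat f^{(m)}_H$ is at least cubic in the high modes. Lemma \ref{tameestlemma} supplies the extra factor $N^{-(s-1)}$ for $\hat f^{(m)}_H$, so this piece will feed into $\cR^{(m+1)}_N$ together with $\cR^{(m)}_N\circ\Phi^1_{\chi^{(m+1)}}$. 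I would then apply Lemma \ref{homeqnrlemma} to $\hat f^{(m)}_L$ to produce the generating function $\chi^{(m+1)}$ and a normal-form increment $Z$ with $\{H_0,\chi^{(m+1)}\}+Z=\hat f^{(m)}_L$, and set $\cT^{(m+1)}:=\cT^{(m)}\circ\Phi^1_{\chi^{(m+1)}}$.

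The bulk of the work is the Lie-series expansion of $H^{(m+1)}=H^{(m)}\circ\Phi^1_{\chi^{(m+1)}}$ via \eqref{lieseriess}. The identity $\{H_0,\chi^{(m+1)}\}=\hat f^{(m)}_L-Z$ makes the degree-$(m+3)$ piece of $H_0\circ\Phi^1_{\chi^{(m+1)}}$ cancel $\hat f^{(m)}_L$ and contribute $Z$, so $Z^{(m+1)}:=Z^{(m)}+Z$ remains of the form \eqref{NFtorusprop}. All remaining contributions---the higher Lie iterates of $H_0,Z^{(m)},f^{(m)}$, the conjugates $\cR^{(m)}_T\circ\Phi^1_{\chi^{(m+1)}}$, $\cR^{(m)}_N\circ\Phi^1_{\chi^{(m+1)}}$, and the leftover $\hat f^{(m)}_H$---are split by Taylor degree ($\leq r+2$ vs.\ $>r+2$) and by Fourier localization (at most quadratic vs.\ at least cubic in high modes) to build $f^{(m+1)}$, $\cR^{(m+1)}_T$, and $\cR^{(m+1)}_N$ respectively. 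The quantitative bounds are then a bookkeeping exercise using \eqref{lieserests}, the homological estimate $\la|\chi^{(m+1)}|\ra_{s,R}\leq(N^\tau/\gamma)\la|\hat f^{(m)}_L|\ra_{s,R}$ from Lemma \ref{homeqnrlemma}, and Lemma \ref{tameestlemma} for the Fourier-cutoff remainder.

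The main technical obstacle will be the propagation of the factor $N^{\tau m}$ in \eqref{itlemmaest2}: each step multiplies the size of the new generating function by one more $N^\tau$ from the small-divisor loss, while \eqref{lieserests} and Lemma \ref{cauchy} both require smallness $\la|\chi^{(m+1)}|\ra_{s,R}\ll R$ in order to compose the Lie transforms and keep $\cT^{(m+1)}-\mathrm{id}$ controlled. To close the iteration, I would shrink the analyticity radius by $R/(2r)$ at each step and choose $R^\ast_{m+1}:=R^\ast_m/C_r$ with $C_r$ a sufficiently large constant, so that the hypothesis $R<R^\ast_{m+1}/N^\tau$ absorbs the $N^\tau$ loss. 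A telescoping bound on $\cT^{(m+1)}=\cT^{(m)}\circ\Phi^1_{\chi^{(m+1)}}$ then yields $\sup\|\cT^{(m+1)}-\mathrm{id}\|_s\sleq N^\tau R^2$ in \eqref{cTtor}, since each individual factor contributes at most $N^\tau R^{k+2}$ with $k\geq 1$.
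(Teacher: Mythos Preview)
Your overall plan (induction, homological equation, Lie series, Fourier cutoff for the high modes) is the right one, but there is a genuine gap in the way you propagate \eqref{itlemmaest2} from level $m$ to level $m+1$. You choose to solve the homological equation only for the \emph{homogeneous} piece $\hat f^{(m)}$ of degree $m+3$, and then list as ``remaining contributions'' the higher Lie iterates, the conjugated remainders, and $\hat f^{(m)}_H$. This leaves out the un-normalized part $f^{(m)}-\hat f^{(m)}$, namely all terms of $f^{(m)}$ of degree $\geq m+4$. Even if you throw them into $f^{(m+1)}$, the inductive bound only gives
\[
\la| f^{(m)} - \hat f^{(m)} |\ra_{s,(1-(m+1)/(2r))R}\;\leq\;\la|f^{(m)}|\ra_{s,(1-m/(2r))R}\;\sleq\; R^{m+2}\,N^{\tau m},
\]
and since $RN^\tau<R^\ast_m\ll 1$ this is strictly \emph{larger} than the target $R^{m+3}N^{\tau(m+1)}$, so \eqref{itlemmaest2} fails at level $m+1$. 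Shrinking the radius by $R/(2r)$ does not recover an extra factor of $R$ on a polynomial of bounded degree: the higher Taylor coefficients of $f^{(m)}$ already carry the factor $N^{\tau m}$ from the previous steps, and the inductive hypothesis as stated gives no sharper control on them.

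The paper avoids this by feeding into Lemma~\ref{homeqnrlemma} not a single homogeneous piece but the \emph{whole} high-mode truncation $f^{(m)}_0$ (the part of $f^{(m)}$ at most quadratic in $(\psi_h,\bar\psi_h)$, of degrees $m+3$ through $r+2$), solving $\{\chi_m,H_0\}+f^{(m)}_0=Z_{m+1}$. Then no zero-th order piece of $f^{(m)}_0$ survives in the new remainder: $f_C^{(m+1)}$ is built \emph{only} from genuine higher Lie iterates $\sum_{l\geq 1}Z^{(m)}_l+\sum_{l\geq 1}f^{(m)}_{0,l}+\sum_{l\geq 2}H_{0,l}$, each carrying at least one extra Poisson bracket with $\chi_m$ and hence an extra factor $(e/d)\la|\chi_m|\ra\sleq (N^\tau R)^{m+1}$ via \eqref{lieserests}. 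That is exactly what makes $\la|f_C^{(m+1)}|\ra\sleq R^2(N^\tau R)^{m+1}$ close. Your ``one degree at a time'' scheme can be made to work, but only if you strengthen the inductive hypothesis to track the individual Taylor coefficients $|f^{(m)}_k|_s$ separately rather than only the aggregate norm $\la|f^{(m)}|\ra_{s,\rho}$; as written, the iteration does not close.
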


\begin{proof}
We argue by induction. The theorem is trivial for the case $m=0$, 
by setting $\cT^{(0)}=id$, $Z^{(0)}=0$, $f^{(0)}=N^{(1)}$, $\cR^{(m)}_N=\cR^{(m)}_T=0$. 

Then we split $f^{(m)}$ into two parts, an effective one and a remainder. 
Indeed, we perform a Taylor expansion of $f^{(m)}$ only in the variables 
$(\psi_h,\bar\psi_h)$, namely we write
\begin{align*}
f^{(m)} &= f^{(m)}_0+ f^{(m)}_N,
\end{align*}
where $f^{(m)}_0$ is the truncation of such a series at second order, 
and $f^{(m)}_0$ is the remainder. Since both $f^{(m)}_0$ and $f^{(m)}_N$ are 
truncations of $f^{(m)}$, one has that
\begin{align*}
\la|f^{(m)}_0|\ra_{s,(1-m/(2r)) R} &\sleq \la|f^{(m)}|\ra_{s,(1-m/(2r)) R} \\
\la|f^{(m)}_N|\ra_{s,(1-m/(2r)) R} &\sleq \la|f^{(m)}|\ra_{s,(1-m/(2r)) R}.
\end{align*}
Now consider the truncated Hamiltonian $H_0 + Z^{(m)} + f^{(m)}_0$: 
we look for a Lie transform $\cT_m$ that eliminates the non-normalized 
part of order $m+4$ of the truncated Hamiltonian. 
Let $\chi_m$ be the analytic Hamiltonian generating $\cT_m$. 
Using \eqref{lieseriess} we have 

\begin{align}
& (H_0 + Z^{(m)} + f^{(m)}_0) \circ \cT_m = H_0 + Z^{(m)} \nonumber \\
&+ f^{(m)}_0 + \{ \chi_m,H_0 \} \label{nonnorm1s} \\
&\; \; \; + \sum_{l \geq 1} Z_l^{(m)} + \sum_{l \geq 1} f_{0,l}^{(m)} + \sum_{l \geq 2} H_{0,l}, \label{nonnorm2s} 
\end{align}

with $Z^{(m)}_l$ the $l$-th term in the expansion of the Lie transform of 
$Z^{(m)}$, and similarly for the other quantities. 
It is easy to see that the terms in the first line are already normalized, 
that the term in the \eqref{nonnorm1s} is the non-normalized part of order 
$m+3$ that will vanish through the choice of a suitable $\chi_m$,
 and that the last lines contains all the terms having a zero of order $m+4$ 
at the origin. 

Now we want to determine $\chi_m$ in order to solve the so-called 
``homological equation''
\begin{align*}
\{ \chi_m,H_0 \} + f^{(m)}_0 \; &= \; Z_{m+1},
\end{align*}
with $Z_{m+1}$ depending only on the actions and satisfying \eqref{NFtorusprop}. 
The existence of $\chi_m$ and $Z_{m+1}$ is ensured by Lemma \ref{homeqnrlemma}, 
and by applying \eqref{vfhomest2} and \eqref{itlemmaest1} we get 

\begin{align} \label{esthomeqm}
\la|\chi_m|\ra_{s,(1-m/(2r)) R} &\leq N^\tau \; R^2 \left( N^\tau \; R \right)^m, \\
\la|Z_{m+1}|\ra_{s,(1-m/(2r)) R} &\leq R^2 \; \left( N^\tau \; R \right)^m.
\end{align}
In particular, in view of \eqref{vfests}, we can deduce \eqref{cTtor} at level 
$m+1$. Now define $Z^{(m+1)} := Z^{(m)}+Z_{m+1}$, and $f_C^{(m+1)} := \eqref{nonnorm2s}$.
By \ref{esthomeqm}, recalling that $R < R_m^\ast / N^\tau$, 
we can deduce \eqref{itlemmaest1} at level $m+1$. Moreover, provided that 
$R_m^\ast < 2^{-(m+1)/2}$, one has
\begin{align*}
\delta:= e \, \frac{2 r}{R} \, \la|\chi_m|\ra_{s,(1-m/(2r)) R} &\leq \left( N^\tau \; R \right)^{m+1} < \frac{1}{2}.
\end{align*}
By \eqref{lieserests} and \eqref{itlemmaest1} one thus gets 
\begin{align*}
\la|f_C^{(m+1)}|\ra_{s,(1-(m+1)/(2r)) R} &\sleq \sum_{l\geq1} R^2 \delta^l + \sum_{l\geq1} R^2 \delta^l \left( N^\tau \; R \right)^m + \sum_{l\geq2} R^2 \delta^{l-1} \left( N^\tau \; R \right)^m \\
&\sleq R^2 \left( N^\tau \; R \right)^{m+1}.
\end{align*}
Write now $f_C^{(m+1)} = f^{(m+1)} + \cR_{m,T}$, where $f^{(m+1)}$ is the Taylor 
polynomial of order $r+2$ of $f_C^{(m+1)}$, and where $\cR_{m,T}$ has a zero 
of order $r+3$ at the origin. Clearly $f^{(m+1)}$ satisfies 
\eqref{itlemmaest2} at level $m+1$, since it is a truncation of $f_C^{(m+1)}$. 
The remainder may be bounded by using Lagrange and Cauchy estimates,
\begin{align*}
\sup_{B_s( (1-m/(2r)) R )} \|X_{ \cR_{m,T} }(\psi,\bar\psi)\|_s &\sleq 
\frac{1}{(r+2)!} \, R^{r+2} \, \sup_{B_s \left( R_m^\ast /(2 N^\tau) \right)} \|\d^{r+2} X_{ f_C^{(m+1)} }(\psi,\bar\psi)\|_s \\
&\sleq R^{r+2} \, \left( \frac{2 N^\tau}{R_m^\ast} \right)^{r+2} \, \sup_{B_s \left( R_m^\ast/ N^\tau \right)} \|X_{ f_C^{(m+1)} }(\psi,\bar\psi)\|_s  \\
&\sleq \left( N^\tau \; R \right)^{r+2}.
\end{align*}
Now define $\cR_T^{(m+1)} := \cR_T^{(m)} \circ \cT_m + \cR_{m,T}$. 
By \eqref{vfests} we can deduce \eqref{itlemmarem1} at level $m+1$.
Then set $\cR_N^{(m+1)} := (\cR_N^{(m)} + f_N^{(m)}) \circ \cT_m$. 
By \eqref{itlemmaest2} and \eqref{itlemmarem2}, together with \eqref{vfests} 
and \eqref{tameest} in Lemma \ref{tameestlemma}, 
we obtain \eqref{itlemmarem2} at level $m+1$. 
\end{proof}

Now we conclude the proof of Theorem \ref{BNFtorusthm}. \\
By taking the canonical transformation $\cT^{(r)}$ defined in the iterative 
Lemma \ref{itlemmator} we have that 
\begin{align} \label{}
H^{(r)} &= H_0 + Z^{(r)} + \cR_N^{(r)} + \cR_T^{(r)}  + N^{(1,r)} \circ \cT^{(r)},
\end{align}
with $Z^{(r)}$ depending only on the actions and satisfying \eqref{NFtorusprop}, 
and for any $R < R^\ast_m / N^\tau$ the following holds
\begin{align*}
\sup_{B_s(R)} \|\cT^{(r)}(\psi,\bar\psi)-(\psi,\bar\psi)\|_{s} &\sleq N^{2\tau} \; R^3, \\
\sup_{B_s(R)} \|X_{ \cR_N^{(r)} }(\psi,\bar\psi)\|_{s} &\sleq \frac{R^2}{N^{s-1}}, \\
\sup_{B_s(R)} \|X_{ \cR_T^{(r)} }(\psi,\bar\psi)\|_{s} &\sleq \left(N^\tau \; R \right)^{r+2}, \\
\sup_{B_s(R)} \|X_{ N^{(1,r)} \circ \cT^{(r)} }(\psi,\bar\psi)\|_{s} &\sleq \left(N^\tau \; R \right)^{r+2}.
\end{align*}
To conclude we have just to choose $N$ and $s$ sich that $\cR_N^{(r)}$ and 
$\cR_T^{(r)}$ are of the same order of magnitude. First take $N=R^{-a}$, with 
$a$ still to be determined; then, in order to obtain that $\cR_T^{(r)}$ is 
of order $\cO(R^{r+3/2})$ we choose $a:= \frac{1}{2\tau (r+2)}$. 
By taking $s > 2\tau r(r+2)+1$ we get that also $N^{(1,r)}$ is of the same 
order of magnitude. 

Now take $K^\ast=1/24$, and construct the canonical transformation 
$(\psi,\bar\psi)=\cT^{(r)}(\psi',\bar\psi')$. Denote by $I'$ the actions 
expressed in the variable $(\psi',\bar\psi')$, and define the function 
$\cN(\psi',\bar\psi') := \|I'\|^2_s$. By \eqref{CTtorusest} one has 
that $\cN(\psi'_0,\bar\psi'_0) \leq \frac{32}{31} R^2$, provided that 
$R$ is sufficiently small. Since 
\begin{align*}
\frac{\d \cN}{\d t}(\psi',\bar\psi') = \{R^{(r)},\cN\}(\psi',\bar\psi'),
\end{align*}
and therefore, as far as $\cN(\psi',\bar\psi') < \frac{64}{9} R^2$,
\begin{align} \label{esctimeest}
\left| \frac{\d \cN}{\d t}(\psi',\bar\psi') \right| &\leq K_s' \; R^{r+5/2}.
\end{align}
Denote by $T_f$ the escape time of $(\psi',\bar\psi')$ from 
$B_s(R/3)$; observe that for all times smaller than 
$T_f$, \eqref{esctimeest} holds. So one has
\begin{align*}
\frac{64}{9} R^2 = \cN(\psi'(T_f),\bar\psi'(T_f)) &\leq \cN(\psi'_0,\bar\psi'_0) + K_s' \; R^{r+5/2} T_f, 
\end{align*}
which shows that $T_f$ should be of order (at least) $R^{r+1/2}$. 
Going back to the original variables one gets \eqref{smallsoltorus}. 
To show \eqref{acttorus}, one has to recall that 
\begin{align*}
|I(t)-I(0)| &\leq |I(t)-I'(t)| + |I'(t)-I'(0)| + |I'(0)-I(0)|,
\end{align*}
and that by \eqref{CTtorusest} and \eqref{smallsoltorus} one can estimate the 
first and the third term; the second term can be bounded by computing the time 
derivative of $\|I'\|^2_s$ with the Hamiltonian, and observing that it is of 
order $\cO(R^{r+5/2})$. 

Now, consider the initial actions $(I_0,\bar I_0) := (I(0),\bar I(0))$. 
By passing to the Fourier transform, 
\begin{align*}
I_j(t) &:= \widehat{I(t)}(j), \; \; j \geq 1, 
\end{align*}
we have that for any $r_1 \leq r$
\begin{align} \label{estactr1}
| (I_j(t),\bar I_j(t)) - (I_j(0),\bar I_j(0)) | &\sleq \frac{R^{2r_1}}{j^{2s}}, \; \; |t| \sleq R^{-(r-r_1+1/2)}.
\end{align}
If we define the torus 
\begin{align*}
I_c &:= \{ (\psi,\bar\psi) \in H^s : (I_j(\psi,\bar\psi),\bar I_j(\psi,\bar\psi) = (I_j(0),\bar I_j(0) ), \; \; \text{for any} \; \; j \geq 1 \},
\end{align*}
we get 
\begin{align*}
d_{s_1}( (\psi(t),\bar\psi(t)), I_c )  &\leq \left[ \sum_j j^{2s_1} \left( |\sqrt{I_j(t)} - \sqrt{I_j(0)} |^2 + |\sqrt{\bar I_j(t)} - \sqrt{\bar I_j(0)}|^2 \right) \right]^{1/2},
\end{align*}
and by using \eqref{estactr1} we obtain
\begin{align*}
d_{s_1}( (\psi(t),\bar\psi(t)), I_c ) ^2 &\leq \left( \sup_j j^{2s} |I_j(t) - I_j(0)|^2 + j^{2s} |\bar I_j(t) - \bar I_j(0)|^2 \right) \; \sum_j \frac{1}{j^{2(s-s_1)}},
\end{align*}
which is convergent for $s_1 < s-1/2$. \\

\bibliography{P_NLKG_2017b}
\bibliographystyle{alpha}

\end{document}